\DeclareRobustCommand{\cev}[1]{%
  \mathpalette\do@cev{#1}%
}
\newcommand{\do@cev}[2]{%
  \fix@cev{#1}{+}%
  \reflectbox{$\m@th#1\vec{\reflectbox{$\fix@cev{#1}{-}\m@th#1#2\fix@cev{#1}{+}$}}$}%
  \fix@cev{#1}{-}%
}
\newcommand{\fix@cev}[2]{%
  \ifx#1\displaystyle
    \mkern#23mu
  \else
    \ifx#1\textstyle
      \mkern#23mu
    \else
      \ifx#1\scriptstyle
        \mkern#22mu
      \else
        \mkern#22mu
      \fi
    \fi
  \fi
}
\newtheorem{thm}{Theorem}
\newtheorem*{thm*}{Theorem}
\newtheorem{lem}[thm]{Lemma}
\newtheorem{corol}[thm]{Corollary}
\newtheorem{defn}[thm]{Definition}
\newtheorem*{defn*}{Definition}
\newtheorem{convention}{Convention}
\newtheorem{remark}{Remark}
\def\sep{\,:\,}
\def\sub{\subseteq}
\def\defeq{:=}
\def\bbq{\mathbb{Q}}
\def\bbr{\mathbb{R}}
\def\defeq{:=}
\date{}
\begin{document}

\author{Dmitry Gayfulin}
\author{Erez Nesharim}
\affil{{Department of Mathematics, Technion -- Israel Institute of Technology, Haifa.} \textit{gamak.57.msk@gmail.com}, \textit{ereznesh@gmail.com}}



\title{Every real number is a sum of two real numbers with diverging partial quotients}

\maketitle

\begin{abstract}
We show that every irrational number is a sum of two real numbers with diverging partial quotients. The proof is constructive. The key towards these results is an algorithm which was recently developed by Nikita Shulga \cite{shulga}, and our study of this algorithm is of independent interest.
\end{abstract}

\section{Introduction}

Define the set 
$$G \defeq \{\alpha\in\bbr\sep \alpha\in\bbq \textrm{ or } a_n(\alpha)\xrightarrow[n\to\infty]{}\infty\}\,,$$
where $a_n(\alpha)$ is the $n$th partial quotient in the continued fraction expansion of a real number $\alpha$, whenever its continued fraction expansion is 
longer than $n$. 
The set $G$ was first studied by Good \cite{good}, where it is proved that its Hausdorff dimension is
$1/2$. 
In this paper we resolve a question suggested by Shulga \cite[Problem 6.2]{shulga} and show that every real number is a sum of two real numbers in $G$.

\begin{thm}\label{thm:main}
    For every $\alpha\in\bbr$ there exist $\beta,\gamma\in G$ such that 
    \begin{equation}\label{decomposition}
    \alpha=\beta+\gamma\,.
    \end{equation}
\end{thm}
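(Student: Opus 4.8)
The plan is to build $\beta$ and $\gamma$ one partial quotient at a time, as the unique points of two nested sequences of continued-fraction cylinders. After subtracting an integer from $\alpha$ (and adding it to $\beta$) we may assume $\alpha\in[0,1)$; the construction below produces irrational $\beta,\gamma$ with $a_n(\beta),a_n(\gamma)\to\infty$, so the case $\alpha\in\bbq$ requires no separate treatment. Write $\Delta(a_1,\dots,a_m)\sub[0,1)$ for the cylinder determined by partial quotients $a_1,\dots,a_m$: it is an interval with endpoints the convergents $[0;a_1,\dots,a_m]$ and $[0;a_1,\dots,a_m+1]$, of length $\asymp q_m^{-2}$, and appending $a_{m+1}=n$ yields the sub-cylinder $\Delta(a_1,\dots,a_m,n)$, which slides toward the endpoint $[0;a_1,\dots,a_m]$ and shrinks like $n^{-2}q_m^{-2}$ as $n\to\infty$. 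I would construct cylinders $B_0\supseteq B_1\supseteq\cdots$ and $C_0\supseteq C_1\supseteq\cdots$ with $\alpha\in B_0+C_0$, maintaining the invariant $\alpha\in B_k+C_k$ (sum of intervals), together with $|B_k|,|C_k|\to0$ and the requirement that every partial quotient appended from stage $k$ onward is $\ge k$. Setting $\beta=\bigcap_k B_k$ and $\gamma=\bigcap_k C_k$, the last requirement forces $\beta,\gamma\in G$, while $\beta+\gamma$ and $\alpha$ both lie in every interval $B_k+C_k$, and these are nested with lengths $\to0$, so $\alpha=\beta+\gamma$.

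The engine of the construction is a one-step extension lemma: given cylinders $B,C$ with $\alpha\in B+C$ and a threshold $N$, produce sub-cylinders $B'\sub B$ and $C'\sub C$ obtained by appending partial quotients $\ge N$, still with $\alpha\in B'+C'$. A first reduction is that it suffices to arrange $\alpha\in R_B(N)+R_C(N)$, where $R_B(N)\sub B$ is the union of the depth-one sub-cylinders of $B$ with next partial quotient $\ge N$ (an interval of length $\asymp|B|/N$ abutting the endpoint $[0;a_1,\dots,a_m]$ of $B$), and similarly $R_C(N)$: indeed, a representation $\alpha=b+c$ with $b\in R_B(N)$ and $c\in R_C(N)$ puts $b$ into some $\Delta(a_1,\dots,a_m,n)$ with $n\ge N$ and $c$ into some $\Delta(a_1',\dots,a_{m'}',n')$ with $n'\ge N$, and these are the desired $B',C'$. (To force \emph{all} newly created partial quotients above $N$ rather than just the first, iterate the lemma a bounded number of times, or fold this into the running threshold.) So the whole problem reduces to keeping $\alpha$ inside the sum of the ``$N$-deep regions'' of the current cylinders while $N\to\infty$.

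I expect the main obstacle to be the following rigidity: $R_B(N)$ clings to a single endpoint $e_B$ of $B$ --- the convergent at which the deep sub-cylinders accumulate --- and once $B$ is refined to one of those sub-cylinders, the new good endpoint $e_{B'}$ jumps to the opposite end of $B'$; the same happens for $C$. Hence a naive symmetric refinement of both cylinders at every step will generally push $\alpha$ out of reach of the new good corners. This is precisely the difficulty that Shulga's algorithm is designed to handle: one refines the two numbers asymmetrically, at each stage choosing which of $\beta,\gamma$ to extend and, crucially, \emph{which} admissible sub-cylinder (equivalently, which partial quotient $\ge N$) to take, so as to steer the new good endpoint into a position from which the constraint $\alpha\in B_{k+1}+C_{k+1}$ can be re-established for the other coordinate at the following stage. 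Concretely, I would track an admissibility parameter such as $\operatorname{dist}\!\big(\alpha,\partial(B_k+C_k)\big)/(|B_k|+|C_k|)$ together with a record of the orientations of $e_{B_k},e_{C_k}$ relative to $\alpha$, show that the freedom in the choice of refining partial quotient keeps this parameter bounded away from $0$ while still driving all partial quotients to $\infty$, and ensure that each of $\beta,\gamma$ is refined infinitely often. Verifying that this asymmetric scheme never gets stuck --- essentially the correctness of Shulga's algorithm, upgraded to force divergence of the partial quotients of \emph{both} summands simultaneously --- is where I expect nearly all of the work to lie; granting it, the limiting argument of the first paragraph completes the proof.
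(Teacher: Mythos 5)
Your proposal is a strategy outline rather than a proof: the one place where you write ``granting it, the limiting argument \dots completes the proof'' is precisely where the entire mathematical content of the theorem lives, and nothing in the proposal supplies it. Concretely, you reduce the problem to showing that $\alpha$ can be kept inside $R_{B}(N)+R_{C}(N)$, a corner region of relative size $O(1/N)$ in $B+C$, for a threshold $N\to\infty$, and you correctly observe that the good corner flips to the opposite end after each refinement. But you then only \emph{name} the difficulty (``verifying that this asymmetric scheme never gets stuck'') and propose tracking an invariant $\operatorname{dist}(\alpha,\partial(B_k+C_k))/(|B_k|+|C_k|)$ bounded away from $0$ --- which is not even the right quantity, since what is needed is that $\alpha$ lies within $O(1/N)$ of the \emph{correct} corner, not merely in the interior. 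No argument is given that such an invariant can be propagated, and this is exactly the hard part.

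The paper's mechanism is also structurally different from what you describe. Shulga's algorithm has no ``freedom in the choice of refining partial quotient'': $b_{n+1}$ and $c_{n+1}$ are forced greedily by $\alpha$ and the previously chosen digits (Definition \ref{def:shulga}), so the algorithm cannot get stuck --- the nontrivial claim is that the forced digits \emph{diverge}. That divergence is proved a posteriori via a chain of continuant inequalities: the interval--intersection analysis of Lemma \ref{Cn_Bn_lem} (parts \eqref{item:cn_gt_lem} and \eqref{bn_gt_lem}) yields the strict inequality $c_n>b_n$ (Theorem \ref{Th1}), which forces the ratio $t_n/q_n$ of the denominators of the two summands' convergents to increase to infinity (Lemmas \ref{lem:mon} and \ref{lem:t_n_q_n_infty}), which in turn gives $b_{n+1}>(t_n/q_n)^2-1\to\infty$. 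None of these quantitative steps, nor any substitute for them, appears in your proposal, so as written it does not constitute a proof. (A minor additional point: since the paper's $G$ contains all rationals, the rational case of the theorem is trivial via $\alpha=\alpha+0$; your promise to produce irrational summands even for rational $\alpha$ is unnecessary and, in the paper's algorithm, false --- the algorithm terminates on rationals.)
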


Decompositions of real numbers as a sum of two numbers satisfying a Diophantine property are abundant in the literature. Arguably, the most notable examples of this sort are the theorem of Hall \cite{hall}, where $G$ is replaced by 
$$F(a) \defeq \{\alpha\in\bbr\sep a_n(\alpha)\leq a \textrm{ for every $n\geq1$}\}\,,$$
for $a=4$, and the theorem of Erd\H{o}s \cite{erdos}, where $G$ is replaced by the set of Liouville numbers. Another Diophatine property was studied by Cusick \cite{cusick}. Let 
$$S(a) \defeq \{\alpha\in\bbr\sep a_n(\alpha)\geq a\textrm{ for every $n\geq1$}\}\,,$$
where $a\geq2$ is a fixed integer. Following Cusick's result for $a=2$, Shulga \cite{shulga} proved that for any $a\geq2$ and any $0\leq\alpha\leq \frac{1}{a-1}$, there exists a decomposition \eqref{decomposition} with $\beta,\gamma\in S(a)$.

Our proof of Theorem \ref{thm:main} is constructive. In fact, our decomposition \eqref{decomposition} is given by an algorithm that was developed and analyzed by Shulga \cite{shulga}, and the proof relies on complementing its analysis. Shulga's algorithm is described in the next subsection, and the analysis we carry out is of independent interest. The reader is referred to Section \ref{prelim_sec} for more background on continued fractions. 


\subsection{Shulga's decomposition}
In \cite{shulga} Shulga established a nice and simple algorithm which decomposes real numbers into a sum of two real numbers with remarkable properties.

\begin{defn}[Shulga]\label{def:shulga}
For any $\alpha\in[0,1]$ define sequences $\{b_n\}_{n\geq1}$ and $\{c_n\}_{n\geq1}$, and real numbers $\beta$ and $\gamma$ as follows:
\begin{enumerate}[label=\arabic*.]
\item For every $n\geq0$, if the integers $b_k$ and $c_k$ are defined for every $1\leq k\leq n$, then if $$\alpha = [0;b_1,\ldots,b_{n}] + [0;c_1,\ldots,c_{n}]\,,$$ 
we put
\begin{equation*}
    \beta\defeq [0;b_1,\ldots,b_{n}]\,,\quad\textrm{and} \quad \gamma\defeq [0;c_1,\ldots,c_{n}]\,,
\end{equation*}
and finish the algorithm. Otherwise, define
\begin{equation*}
    \begin{aligned}
        b_{n+1}&\defeq a_{n+1}(\alpha-[0;c_1,\ldots,c_{n}])+1\,,\quad\textrm{and} \\
        c_{n+1}&\defeq a_{n+1}(\alpha-[0;b_1,\ldots,b_n,b_{n+1}])\,.
    \end{aligned}
\end{equation*}
\item If $b_n$ and $c_n$ are defined for every $n\geq1$, i.e., the algorithm takes infinitely many steps, then we put
\begin{equation*}
    \beta\defeq [0;b_1,b_2,\ldots]\,,\quad \textrm{and} \quad \gamma\defeq [0;c_1,c_2,\ldots]\,.
\end{equation*}
\end{enumerate}
\end{defn}

The following properties of the algorithm were established in \cite{shulga}:
\begin{thm}[Shulga]\label{thm:shulga}
For any $\alpha\in[0,1]$ the numbers $\beta$ and $\gamma$ in Definition \ref{def:shulga} are well defined and satisfy 
\begin{equation*}
    \alpha=\beta+\gamma\,.    
\end{equation*} 
Moreover, the sequences $\{b_n\}_{n\geq1}$ and $\{c_n\}_{n\geq1}$ in Definition \ref{def:shulga} satisfy 
\begin{equation*}
\begin{aligned}
    b_k &= a_k(\alpha-[0;c_1,\ldots,c_n])\,\quad\textrm{and} \\
    c_k &= a_k(\alpha-[0;b_1,\ldots,b_n])\,,
\end{aligned}
\end{equation*}
for any $1\leq k\leq n$.
\end{thm}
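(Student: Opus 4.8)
Here is how I would attack Theorem~\ref{thm:shulga}.

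\medskip\noindent\emph{Overall plan.} I would prove all three assertions at once by a single induction on $n$, carrying a quantitative invariant. Write $\beta_n\defeq[0;b_1,\dots,b_n]$, $\gamma_n\defeq[0;c_1,\dots,c_n]$, let $q_n,\widetilde q_n$ be their denominators, and set $\varepsilon_n\defeq\alpha-\beta_n-\gamma_n$. The inductive claim is: as long as the algorithm has not stopped before step $n$, the $b_i,c_i$ are well-defined integers $\ge 1$ (in fact every $b_i\ge 2$), one has $a_k(\alpha-\beta_n)=c_k$ for $1\le k\le n$, the sign of $\varepsilon_n$ is $(-1)^n$, and $|\varepsilon_n|$ is at most the length of the cylinder $\{x:a_i(x)=b_i,\ 1\le i\le n\}$, i.e. $|\varepsilon_n|\le\big(q_n(q_n+q_{n-1})\big)^{-1}$. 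Two of the theorem's conclusions then come for free. Since $\alpha-\gamma_n=\beta_n+\varepsilon_n$ and $\varepsilon_n$ has exactly the sign, and by the invariant the size, needed to keep $\alpha-\gamma_n$ inside that cylinder (whose endpoints are $[0;b_1,\dots,b_n]$ and $[0;b_1,\dots,b_n,1]$), we get $a_k(\alpha-\gamma_n)=b_k$ for $1\le k\le n$, which is the second displayed identity; and since $|\varepsilon_n|\le q_n^{-2}$ with $q_n\to\infty$, in the nonterminating case $\beta+\gamma=\lim(\beta_n+\gamma_n)=\alpha$ (in the terminating case this is just the stopping rule).

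\medskip\noindent\emph{The inductive step, easy half.} Assume the claim at level $n$ and that the stopping rule fails at step $n$; then $\varepsilon_n\neq0$ and $\alpha-\gamma_n$ lies in the \emph{open} cylinder, so $a_{n+1}(\alpha-\gamma_n)$ exists, $b_{n+1}\ge2$ is well-defined, and $\alpha-\gamma_n$ sits in the sub-cylinder whose $(n{+}1)$-st partial quotient is $b_{n+1}-1$, adjacent across the common endpoint $\beta_{n+1}$ to the one with $(n{+}1)$-st partial quotient $b_{n+1}$. Using the monotonicity of $t\mapsto[0;b_1,\dots,b_n,t]$ (whose sign is $(-1)^{n+1}$) and the alternation of consecutive convergents $\beta_n,\beta_{n+1}$, I would check that $\alpha-\beta_{n+1}$ is trapped strictly between $\alpha-\beta_n$ — which by the level-$n$ identity lies strictly inside the cylinder $\{x:a_i(x)=c_i,\ i\le n\}$ — and the endpoint $\gamma_n$ of that cylinder; hence $\alpha-\beta_{n+1}$ is strictly inside it, $a_{n+1}(\alpha-\beta_{n+1})$ exists, $c_{n+1}\ge1$ is well-defined, and $a_k(\alpha-\beta_{n+1})=c_k$ for $k\le n+1$. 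The sign of $\varepsilon_{n+1}$ is then forced, since $\gamma_{n+1}$ is the $(n{+}1)$-st convergent of $\alpha-\beta_{n+1}$.

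\medskip\noindent\emph{The main obstacle.} The heart of the matter is propagating the size bound: $|\varepsilon_{n+1}|\le\big(q_{n+1}(q_{n+1}+q_n)\big)^{-1}$, which is precisely the statement that $\alpha-\gamma_{n+1}=\beta_{n+1}+\varepsilon_{n+1}$ does not overshoot the cylinder $\{x:a_i(x)=b_i,\ i\le n+1\}$. From $\beta_{n+1}-\beta_n=(-1)^n/(q_nq_{n+1})$, $\gamma_{n+1}-\gamma_n=(-1)^n/(\widetilde q_n\widetilde q_{n+1})$ and the known signs one obtains the exact recursion $|\varepsilon_{n+1}|=(q_nq_{n+1})^{-1}+(\widetilde q_n\widetilde q_{n+1})^{-1}-|\varepsilon_n|$. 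The role of the ``$+1$'' in $b_{n+1}$ is exactly that it forces $\alpha-\gamma_n=[0;b_1,\dots,b_n,t]$ with $t<b_{n+1}$, hence $|\varepsilon_n|=\big(q_n(tq_n+q_{n-1})\big)^{-1}>(q_nq_{n+1})^{-1}$, and the recursion already gives $|\varepsilon_{n+1}|<(\widetilde q_n\widetilde q_{n+1})^{-1}$.

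\medskip Turning this last estimate into the sharp bound $(q_{n+1}(q_{n+1}+q_n))^{-1}$ requires a quantitative comparison of the two denominator sequences, and this is where I expect the real work to lie. A crude inequality such as $\widetilde q_k\ge q_k$ does not suffice on its own, and it is not even term-by-term forced (one can have $c_k<b_k$ while still $\widetilde q_k\ge q_k$ because of slack accumulated earlier), so the induction will have to carry a finer companion estimate relating $q_\bullet$, $\widetilde q_\bullet$ and the tails $t$ — one with enough slack at the base step ($c_1\ge(b_1-1)b_1$ while $q_1=b_1$) to survive to all $n$. Identifying the correct form of this companion invariant so that it closes the induction is, I expect, the crux of the proof.
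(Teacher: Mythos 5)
Your induction is the paper's argument in a different dress: the invariant ``$\alpha-\gamma_n$ lies in the cylinder of $(b_1,\dots,b_n)$ and $\alpha-\beta_n$ lies in the cylinder of $(c_1,\dots,c_n)$'' is exactly membership in the paper's intervals from \eqref{cn_bn_def}, and your inductive step is the covering statement $C_n\subseteq\bigcup_bB_{n+1}(b)\cup\{\beta_n+\gamma_n\}$, $B_{n+1}(b)\subseteq\bigcup_cC_{n+1}(b,c)$. Your ``easy half'' is fine. But you have correctly located, and left open, the one step carrying all the content, and your diagnosis of it is partly off. The companion invariant you are looking for is simply $c_k\ge b_k$ for every $k$ (the paper's \eqref{eq:main-shulga}), which by the recurrence \eqref{eq:convergents} gives $t_k\ge q_k$ for the denominators; and contrary to what you write, this crude bound \emph{does} suffice. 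The loss occurs only because your recursion $|\varepsilon_{n+1}|=(q_nq_{n+1})^{-1}+(t_nt_{n+1})^{-1}-|\varepsilon_n|$ discards information: your own easy half already places $\alpha-\beta_{n+1}$ strictly inside the cylinder of $(c_1,\dots,c_{n+1})$, whose near endpoint is $\gamma_{n+1}$ and whose length is $\bigl(t_{n+1}(t_{n+1}+t_n)\bigr)^{-1}$, so
$|\varepsilon_{n+1}|<\bigl(t_{n+1}(t_{n+1}+t_n)\bigr)^{-1}\le\bigl(q_{n+1}(q_{n+1}+q_n)\bigr)^{-1}$
the moment $t_{n+1}\ge q_{n+1}$ and $t_n\ge q_n$. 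This is precisely the paper's inequality \eqref{tn_le_qn}.

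The genuine gap is therefore the proof of $c_k\ge b_k$ itself, and your remark that it is ``not term-by-term forced'' points the wrong way: it is forced at every step, and nothing weaker appears usable. The paper obtains it by playing two of your constraints against each other: the non-emptiness of $B_n\cap C_n$ (the role of the ``$+1$'', i.e.\ your lower bound $|\varepsilon_n|>(q_nq_{n+1})^{-1}$) translates into \eqref{CnBn_ineq1}, the non-emptiness of $C_n\cap C_{n-1}$ (the fact that $\alpha-\beta_n$ stayed inside the previous $c$-cylinder) translates into \eqref{CnCn1_ineq0}, and multiplying the two gives Lemma \ref{Cn_Bn_lem}\eqref{item:cn_gt_lem},
$$c_n>(1+[0;c_{n-1},\ldots,c_1])(b_n-1+[0;b_{n-1},\ldots,b_1])>b_n-1\,,$$
whence $c_n\ge b_n$ since both are integers. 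Neither constraint alone yields this. Until you supply an argument of this kind, the size bound does not propagate and the induction does not close, so the proposal has a real gap at exactly the point you flagged as the crux.
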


As will be seen below, a key step in Shulga's proof of Theorem \ref{thm:shulga} is to show that sequences $\{b_n\}_{n\geq1}$ and $\{c_n\}_{n\geq1}$ of Definition \ref{def:shulga} satisfy the inequality (see Section \ref{sec_intersec})
\begin{equation}\label{eq:main-shulga}
    c_n\ge b_n\,,
\end{equation}
for every $n\geq1$. Likewise, a key step in the proof of Theorem
\ref{thm:main} is a strengthening of 
\eqref{eq:main-shulga}:

\begin{thm}
\label{Th1}
For any $\alpha\in[0,1]$ and $n>1$ the Shulga decomposition 
satisfies 
\begin{equation}
\label{cnbn_main_ineq}
c_{n} > b_n\,.
\end{equation}
\end{thm}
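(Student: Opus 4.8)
The plan is to prove the strict inequality $c_n > b_n$ by bootstrapping from Shulga's non-strict inequality \eqref{eq:main-shulga}, tracing exactly when equality $c_n = b_n$ could occur and showing it forces a contradiction for $n>1$. First I would recall the defining relations
\[
b_{n+1}=a_{n+1}\bigl(\alpha-[0;c_1,\ldots,c_n]\bigr)+1,\qquad c_{n+1}=a_{n+1}\bigl(\alpha-[0;b_1,\ldots,b_{n+1}]\bigr),
\]
so that by Theorem \ref{thm:shulga} we have $c_{n+1}-b_{n+1}=a_{n+1}(\gamma_n)-a_{n+1}(\beta_n)-1$, where I abbreviate $\beta_n\defeq\alpha-[0;c_1,\ldots,c_n]$ (the ``tail'' whose partial quotients are the $b_k$) and $\gamma_n\defeq\alpha-[0;b_1,\ldots,b_n]$ (whose partial quotients are the $c_k$). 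Thus $c_{n+1}>b_{n+1}$ is equivalent to $a_{n+1}(\gamma_n)\geq a_{n+1}(\beta_n)+2$, i.e. the $(n+1)$st partial quotient of $\gamma_n$ must exceed that of $\beta_n$ by at least $2$. The key point is that $\beta_n$ and $\gamma_n$ agree in their first $n$ partial quotients with $\beta$ and $\gamma$ respectively (up to the usual care about finite expansions), and $\beta+\gamma=\alpha$, while $[0;b_1,\ldots,b_n]+[0;c_1,\ldots,c_n]\neq\alpha$ (otherwise the algorithm would have terminated).

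The main step is a monotonicity/comparison argument. Since $b_k\leq c_k$ for all $k\leq n$ by \eqref{eq:main-shulga}, standard continued-fraction inequalities give that $[0;b_1,\ldots,b_n]$ and $[0;c_1,\ldots,c_n]$ are ordered in a controlled way, and correspondingly the tails $\beta_n=\alpha-[0;c_1,\ldots,c_n]$ and $\gamma_n=\alpha-[0;b_1,\ldots,b_n]$ satisfy $\beta_n\leq\gamma_n$ (subtracting the larger convergent-type quantity gives the smaller tail). A real number in $(0,1)$ with a smaller value has at least as large a first partial quotient, so $a_{n+1}(\beta_n)$ versus $a_{n+1}(\gamma_n)$ already tends in the right direction, but getting the gap of $2$ rather than $1$ is the crux. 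Here I would exploit the $+1$ built into the definition of $b_{n+1}$: I expect that the inequality $c_k\geq b_k$ is actually strict for $k<n$ by induction (this is exactly Theorem \ref{Th1} at the previous level, with the base case $n=2$ handled separately), and that one strict inequality $c_k > b_k$ at an earlier index $k$ propagates, via the recursive structure of continued fractions and the fact that $a\mapsto[0;a,\ldots]$ is strictly monotone, into a strict separation at level $n+1$. Combined with the $+1$ shift this should yield a gap of at least $2$.

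The base case $n=2$ must be checked by hand: one has $b_1=a_1(\alpha)+1=\lfloor1/\alpha\rfloor+1$ and $c_1=a_1(\alpha-[0;b_1])$, and one verifies directly from $0<\alpha-[0;b_1]<[0;b_1]$ (say) that $c_1\geq b_1$, and then that $c_2>b_2$ unless an explicit degenerate configuration occurs — which I would rule out using that the algorithm has not terminated at step $1$, i.e. $\alpha\neq[0;b_1]+[0;c_1]$. The hypothesis $n>1$ in the theorem is presumably necessary precisely because at $n=1$ one can only guarantee $c_1\geq b_1$ with equality genuinely possible.

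The main obstacle I anticipate is bounding the difference $a_{n+1}(\gamma_n)-a_{n+1}(\beta_n)$ from below by $2$ rather than merely showing it is nonnegative: the partial-quotient function is only piecewise constant and highly sensitive, so a soft monotonicity argument gives a gap of $\geq1$ but not obviously $\geq2$. I expect the resolution is to carry a sharper inductive hypothesis through the recursion — not just $c_k>b_k$ but a quantitative statement comparing $\beta_{k}$ and $\gamma_{k}$ (for instance that $\gamma_k - \beta_k$ is bounded below by something like the distance from $\beta_k$ to the nearest rational of the relevant denominator) — so that the $+1$ in the definition of $b_{n+1}$ and the accumulated strict gap together force $a_{n+1}(\gamma_n)$ to jump past $a_{n+1}(\beta_n)+1$. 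Making this quantitative induction close cleanly, especially handling the parity/termination subtleties of finite continued fractions, is where the real work lies.
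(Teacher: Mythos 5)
There is a genuine gap. The whole content of Theorem \ref{Th1} is to exclude the equality case $c_n=b_n$ left open by Shulga's bound \eqref{eq:main-shulga}, and that is precisely the step you defer: you say the gap of $2$ between $a_{n+1}(\gamma_n)$ and $a_{n+1}(\beta_n)$ would follow from ``a sharper inductive hypothesis'' comparing $\beta_k$ and $\gamma_k$ quantitatively, but you never formulate that hypothesis, let alone close the induction. The paper's proof is in fact not an induction on $n$ at all: assuming $c_n=b_n$, it feeds the single fact that $\alpha$ lies simultaneously in $B_n\cap C_n\cap C_{n-1}$ and in $B_{n-1}\cap C_n\cap C_{n-1}$ into two interval-intersection inequalities (Lemma \ref{Cn_Bn_lem}, parts \eqref{item:cn_gt_lem} and \eqref{bn_gt_lem} --- the latter being the new ingredient of the paper), and with $x=[0;b_{n-1},\ldots,b_1]$, $y=[0;c_{n-1},\ldots,c_1]$, $z=b_n=c_n$ these rearrange to $z<\left(1+\tfrac1y\right)(1-x)$ and $z>\left(1+\tfrac1y\right)(1-x)$, an immediate contradiction. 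Nothing in your outline produces a second, opposing inequality of this kind.

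There are also soundness problems in the soft part of your argument. From $c_k\ge b_k$ for all $k$ you cannot conclude that $[0;c_1,\ldots,c_n]\ge[0;b_1,\ldots,b_n]$: the value of a continued fraction is decreasing in the odd-indexed partial quotients and increasing in the even-indexed ones, so the two prefixes need not be ordered, and hence neither are your tails $\beta_n$ and $\gamma_n$ a priori. More importantly, even if they were ordered, ``smaller number has larger first partial quotient'' compares $a_1$; it says nothing about the $(n+1)$st partial quotients of two numbers whose first $n$ partial quotients are \emph{different} prescribed strings, which is what you actually need. Finally, note that $c_{n+1}=a_{n+1}(\alpha-[0;b_1,\ldots,b_{n+1}])$, i.e.\ the relevant tail is $\gamma_{n+1}$, not $\gamma_n=\alpha-[0;b_1,\ldots,b_n]$, so even the identity $c_{n+1}-b_{n+1}=a_{n+1}(\gamma_n)-a_{n+1}(\beta_n)-1$ on which your plan rests is not correct as written.
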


In Section \ref{sec:rationals} we will show that Theorem \ref{Th1} implies the following corollary, which implies Theorem \ref{thm:main}:
\begin{corol}\label{cor:main}
    For any $\alpha\in[0,1]$ the Shulga decomposition \eqref{decomposition} satisfies $\beta,\gamma\in G$. Moreover, if $\alpha\in\bbq$ then $\beta,\gamma\in \bbq$.
\end{corol}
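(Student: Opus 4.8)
The plan is to split according to whether Shulga's algorithm of Definition~\ref{def:shulga} terminates, writing throughout $\beta_n\defeq[0;b_1,\ldots,b_n]$ and $\gamma_n\defeq[0;c_1,\ldots,c_n]$, with reduced denominators $Q_n$ and $q_n$ respectively. If the algorithm stops at step $n$, then $\beta=\beta_n$ and $\gamma=\gamma_n$ are finite continued fractions, hence rational, so $\beta,\gamma\in\bbq\subseteq G$; this already covers the ``moreover'' clause in this case. So assume from now on that the algorithm runs forever. Then $\beta=[0;b_1,b_2,\ldots]$ and $\gamma=[0;c_1,c_2,\ldots]$ are irrational with $a_n(\beta)=b_n$ and $a_n(\gamma)=c_n$ for all $n$, and each $b_n\geq2$ (being a partial quotient plus one). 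Since $c_n\geq b_n$ by \eqref{eq:main-shulga}, it suffices to prove $b_n\to\infty$ in order to conclude $\beta,\gamma\in G$.

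This divergence is the heart of the argument, and it is where Theorem~\ref{Th1} enters. First, \eqref{eq:main-shulga} together with the recursions $q_n=c_nq_{n-1}+q_{n-2}$, $Q_n=b_nQ_{n-1}+Q_{n-2}$ gives $q_n\geq Q_n$ for all $n$ by a trivial induction. The idea is then to monitor the ratio $q_n/Q_n$. A short manipulation of the recursions yields $q_{n+1}Q_n-q_nQ_{n+1}=(c_{n+1}-b_{n+1})q_nQ_n-(q_nQ_{n-1}-q_{n-1}Q_n)$, which together with $c_{n+1}>b_{n+1}$ for $n\geq1$ (Theorem~\ref{Th1}) and $Q_{n-1}<Q_n$ shows, by induction, that $q_n/Q_n$ is non-decreasing. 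Next, by Theorem~\ref{thm:shulga} the continued fractions of $\alpha-\gamma_n$ and of $\beta$ agree up to index $n$, while $a_{n+1}(\alpha-\gamma_n)=b_{n+1}-1$ and $a_{n+1}(\beta)=b_{n+1}$; expanding the identity $(\alpha-\gamma_n)-\beta=\gamma-\gamma_n$ via the standard convergent formulas on each side should yield a lower bound of the shape $b_{n+1}\gg(q_n/Q_n)^2$ (using once more $c_{n+1}>b_{n+1}$). Then I would finish by a dichotomy. If $q_n/Q_n\to\infty$, the lower bound forces $b_n\to\infty$. If $q_n/Q_n$ stays bounded, then from the telescoping identity
\[
\frac{q_n/Q_n}{q_1/Q_1}=\prod_{k=2}^{n}\frac{c_k+q_{k-2}/q_{k-1}}{\,b_k+Q_{k-2}/Q_{k-1}\,}\;\geq\;\prod_{k=2}^{n}\Bigl(1+\tfrac{1}{4b_k}\Bigr),
\]
which holds because $c_k\geq b_k+1$ (Theorem~\ref{Th1}) and $Q_{k-2}/Q_{k-1}\leq\tfrac12$ (as $b_{k-1}\geq2$), one gets $\sum_k1/b_k<\infty$, hence $b_n\to\infty$ again. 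Either way $b_n\to\infty$, and then $c_n\geq b_n\to\infty$, so $\beta,\gamma\in G$.

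For the ``moreover'' clause I would show that if $\alpha=p/q\in\bbq$ then the algorithm must terminate. If not, then for every $n$ the number $\alpha-\beta_n-\gamma_n$ is a nonzero rational with denominator dividing $qQ_nq_n$, so $|\alpha-\beta_n-\gamma_n|\geq1/(qQ_nq_n)$; on the other hand $|\alpha-\beta_n-\gamma_n|\leq|\beta-\beta_n|+|\gamma-\gamma_n|\leq\frac1{Q_nQ_{n+1}}+\frac1{q_nq_{n+1}}\leq\frac{2}{b_{n+1}Q_n^2}$, using $q_n\geq Q_n$ and $Q_{n+1}\geq b_{n+1}Q_n$. Comparing the two estimates gives $b_{n+1}\leq2q\,(q_n/Q_n)$, which together with the lower bound $b_{n+1}\gg(q_n/Q_n)^2$ forces $q_n/Q_n$ to be bounded, hence $b_{n+1}\ll q^2$ for all $n$, contradicting $b_n\to\infty$. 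Thus the algorithm terminates and $\beta,\gamma$ are rational.

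I expect the main obstacle to be the lower bound $b_{n+1}\gg(q_n/Q_n)^2$. The mediant/convergent comparison controls $b_{n+1}$ from above almost for free, but to bound it from below one must analyze the two complete quotients of $\alpha-\gamma_n$ and of $\beta$ at level $n+1$ and show that they cannot be abnormally close; this is precisely the point where I anticipate needing the strict inequality $c_n>b_n$ of Theorem~\ref{Th1} rather than merely \eqref{eq:main-shulga}. The remaining steps are routine continued fraction bookkeeping.
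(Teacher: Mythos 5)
Your overall route is the same as the paper's: prove that the denominator ratio $q_n/Q_n$ (the paper's $t_n/q_n$, with the roles of the letters swapped) is increasing, prove the quadratic lower bound $b_{n+1}>(q_n/Q_n)^2-1$, run a dichotomy to get $b_n\to\infty$, and kill the infinite rational case by a denominator comparison. The monotonicity step, the telescoping product in the bounded case, and the rational-case estimate are all correct as written (the paper's dichotomy is on whether $c_n$ is bounded, and its rational-case argument bounds $t_n/q_n\le q$ directly from the denominator of $\alpha-\beta_n$, but these are cosmetic differences).

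The one genuine gap is exactly the step you flagged: the lower bound $b_{n+1}\gg(q_n/Q_n)^2$. The identity you propose to expand, $(\alpha-\gamma_n)-\beta=\gamma-\gamma_n$, cannot produce it: $\alpha-\gamma_n$ and $\beta$ lie in the \emph{adjacent} level-$(n+1)$ cylinders of $(b_1,\ldots,b_n,b_{n+1}-1)$ and $(b_1,\ldots,b_n,b_{n+1})$, which share the endpoint $[0;b_1,\ldots,b_{n+1}]$, so their distance admits no useful lower bound; estimating both sides of your identity in the obvious way gives $b_{n+1}^2-1\le 2(c_{n+1}+2)(q_n/Q_n)^2$, which is an \emph{upper} bound on $b_{n+1}$. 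The fix is to compare against the level-$n$ convergents instead: from $(\alpha-\gamma_n)-\beta_n=(\alpha-\beta_n)-\gamma_n$, the left-hand side is at least $1/(Q_nQ_{n+1})$ in absolute value, because $\alpha-\gamma_n$ is separated from $\beta_n$ by the entire cylinder of $(b_1,\ldots,b_n,b_{n+1})$, while the right-hand side is smaller than the length $1/(q_n(q_n+q_{n-1}))<1/q_n^2$ of the window $\bigl[[0;c_1,\ldots,c_n],[0;c_1,\ldots,c_n+1]\bigr)$ containing $\alpha-\beta_n$. Hence $Q_nQ_{n+1}>q_n^2$ and, by \eqref{eq:common}, $b_{n+1}+1>Q_{n+1}/Q_n>(q_n/Q_n)^2$. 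This is exactly Lemma \ref{Cn_Bn_lem}\eqref{item:bncn1_lem} applied with $n+1$ in place of $n$, which the paper invokes for precisely this purpose. Note also that, contrary to your closing remark, this step needs only the non-emptiness of $B_{n+1}\cap C_n$ and no comparison of the $b$'s with the $c$'s; the strict inequality $c_n>b_n$ of Theorem \ref{Th1} is instead what you (correctly) use to extract the definite factor $1+\tfrac{1}{4b_k}$ in the telescoping product.
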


\begin{remark}
If $\alpha=p/q\in\bbq$, one can ask what is the length of the continued fraction expansions of the $\beta$ and $\gamma$ in the Shulga decomposition. It follows from the proof of Corollary \ref{cor:main} that this length is $O\left(q^2\right)$. 
Numerical evidence seem to suggest that the maximal length is $O(\log{q})$, but the study of this phenomenon lies beyond the scope of the current paper.
\end{remark}

The key towards Theorem \ref{Th1} is a detailed analysis of the sets of reals with a given first $n$ steps of Shulga's algorithm. This analysis allows us to prove further properties regarding the growth rate of the partial quotients of $\beta$ and $\gamma$. 

\begin{thm}\label{Th2}
For any $\alpha\in[0,1]$ and $n\geq1$ the Shulga decomposition 
satisfies 
\begin{align}
\label{monot_main_ineq}
b_{n+1}&\ge b_n\,,\\
\label{monot_main_ineq2}
b_{n+2}&\ge b_n+1\,,\\
\label{bn_main_ineq}
b_n&\ge n\,.
\end{align}
\end{thm}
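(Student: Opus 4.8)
The plan is to prove the three inequalities simultaneously by induction on $n$, building on Theorem~\ref{Th1} and on the description of the cells
$$J_n=J_n(b_1,\dots,b_n;c_1,\dots,c_n):=\{\alpha\in[0,1]:\text{the first $n$ Shulga steps of }\alpha\text{ are }(b_1,\dots,b_n;c_1,\dots,c_n)\}$$
obtained in the proof of Theorem~\ref{Th1}; these are intervals with one endpoint at $\beta_n+\gamma_n$, where $\beta_n:=[0;b_1,\dots,b_n]$ and $\gamma_n:=[0;c_1,\dots,c_n]$. Write $p^{(b)}_k/q^{(b)}_k$, $p^{(c)}_k/q^{(c)}_k$ for the convergents of $\beta$ and $\gamma$ and set $\rho_k:=q^{(c)}_k/q^{(b)}_k\ge 1$; the last inequality holds because $c_k\ge b_k$ by Theorem~\ref{thm:shulga} and $q_k$ is increasing in the partial quotients.

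The first step is to reformulate $b_{n+1}$ via tails. By Theorem~\ref{thm:shulga} one has $a_k(\alpha-\gamma_n)=b_k$ and $a_k(\alpha-\beta_n)=c_k$ for $k\le n$, while $a_{n+1}(\alpha-\gamma_n)=b_{n+1}-1$ by the definition of the algorithm. Let $w_n\in[b_{n+1}-1,b_{n+1})$ be the tail of $\alpha-\gamma_n$ at index $n+1$ and $z_n\ge 1$ the tail of $\alpha-\beta_n$ at index $n+1$; applying the standard identity for a number minus its $n$-th convergent to $\alpha-\gamma_n$ and to $\alpha-\beta_n$ gives, with $r_n:=\alpha-\beta_n-\gamma_n$,
$$|r_n|=\frac1{q^{(b)}_n\bigl(q^{(b)}_n w_n+q^{(b)}_{n-1}\bigr)}=\frac1{q^{(c)}_n\bigl(q^{(c)}_n z_n+q^{(c)}_{n-1}\bigr)},$$
and solving,
\begin{equation}\label{eq:plan-star}
w_n=\rho_n^2\,z_n+(\rho_n\rho_{n-1}-1)\,\frac{q^{(b)}_{n-1}}{q^{(b)}_n},\qquad\text{so}\qquad b_{n+1}=\lfloor w_n\rfloor+1>w_n\ge\rho_n^2 .
\end{equation}

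The engine of the induction is a matching lower bound for $\rho_{n+1}$, which I expect to be the hard part. It should come from the fact that $\alpha-\gamma_n$ lies in the cylinder $C(b_1,\dots,b_n,b_{n+1}-1)$, one of whose endpoints is $\beta_{n+1}$, so that $\alpha-\beta_{n+1}$ is forced very close to $\gamma_n=[0;c_1,\dots,c_n]$: indeed $|(\alpha-\beta_{n+1})-\gamma_n|=|(\alpha-\gamma_n)-\beta_{n+1}|$ is at most the length of that cylinder. Converting this closeness into a lower bound for $c_{n+1}=a_{n+1}(\alpha-\beta_{n+1})$ of the correct size, of order $b_{n+1}/\rho_n^2$, and hence for $\rho_{n+1}$, enough to carry along an invariant such as $\rho_{n+1}\ge\sqrt{b_{n+1}-1}$, is precisely where the detailed description of $J_n$ — its exact endpoints, and how far into $J_n$ the point $\alpha$ may lie while the algorithm continues — is needed: a crude cylinder bound only gives $c_{n+1}\gtrsim b_{n+1}/\rho_n^2$ up to a lossy multiplicative constant, which the extremal configurations below cannot afford. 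The base case $n=0$ is elementary: $\alpha-\beta_1$ lies in the interval of length $1/(b_1(b_1-1))$ between $\beta_1=1/b_1$ and $1/(b_1-1)$, so $c_1>b_1(b_1-1)$ and $\rho_1=c_1/b_1>b_1-1$.

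Granting the invariant, the three inequalities drop out of \eqref{eq:plan-star}. From $\rho_n\ge\sqrt{b_n-1}$ we get $b_{n+1}>\rho_n^2\ge b_n-1$, hence $b_{n+1}\ge b_n$, which is \eqref{monot_main_ineq}; since $b_n\ge n$, a more careful reading of \eqref{eq:plan-star} — keeping the summand $(\rho_n\rho_{n-1}-1)q^{(b)}_{n-1}/q^{(b)}_n$ and the bound $z_n\ge1$ — upgrades $b_{n+1}\ge b_n\ge n$ to $b_{n+1}\ge n+1$, which is \eqref{bn_main_ineq}. Finally, if $b_{n+1}>b_n$ then \eqref{monot_main_ineq} already gives $b_{n+2}\ge b_{n+1}\ge b_n+1$, whereas a plateau $b_{n+1}=b_n$ forces $w_n<b_n$, which keeps $z_n$ small and, via the closeness estimate, makes $\rho_{n+1}$ large enough that $b_{n+2}>\rho_{n+1}^2\ge b_{n+1}$, i.e.\ $b_{n+2}\ge b_n+1$; this is \eqref{monot_main_ineq2}, and it also re-establishes $\rho_{n+1}\ge\sqrt{b_{n+1}-1}$, closing the induction. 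It remains to treat the terminating (rational $\alpha$) case and the first few values of $n$, where $J_n$ and the convergents are too small for the estimates to bite, by direct inspection.
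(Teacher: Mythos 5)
Your tail identity for $w_n$ is correct, and its consequence $b_{n+1}>\rho_n^2=(t_n/q_n)^2$ is precisely the inequality the paper extracts from Lemma \ref{Cn_Bn_lem}\eqref{item:bncn1_lem} in the proof of Corollary \ref{cor:main}. Moreover, the invariant you hope for, $\rho_n^2\ge b_n-1$, is true and is easier than you fear: it is not an inductive invariant requiring the fine structure of your cells $J_n$, but follows in one step from the nonemptiness of $B_n\cap C_n$ (Lemma \ref{Cn_Bn_lem}\eqref{item:cnbn}), which gives $q_n(q_n-q_{n-1})<t_{n-1}(t_n+t_{n-1})$ and hence
$$\rho_n^2>\frac{\bigl(1-[0;b_n,\dots,b_1]\bigr)\,[c_n;c_{n-1},\dots,c_1]}{1+[0;c_n,\dots,c_1]}\ge\Bigl(1-\tfrac1{b_n}\Bigr)c_n> b_n-1\,,$$
using $c_n\ge b_n+1$ from Theorem \ref{Th1}. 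So \eqref{monot_main_ineq} is recoverable along your lines (the paper instead gets it from the three-interval inequality \eqref{bn_ge_ineq_main}). But you have left this load-bearing step as a desideratum rather than a proof, and the mechanism you propose for it (how far into the cell $\alpha$ may sit) is not the one that works; what is needed are the exact two- and three-interval intersection criteria of Lemma \ref{Cn_Bn_lem}.

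The part that does not close as written is \eqref{bn_main_ineq}. A one-step induction $b_n\ge n\Rightarrow b_{n+1}\ge n+1$ cannot be extracted from your formula: nothing in your toolkit excludes the configuration $b_n=n$, $b_{n+1}=b_n$, and plateaus do occur (e.g.\ $b_1=b_2=2$ for $531/629$), while the extra summand $(\rho_n\rho_{n-1}-1)q^{(b)}_{n-1}/q^{(b)}_n$ is less than $\rho_n^2\,t_{n-1}/t_n<(b_{n+1}+1)/c_n$, hence below $1$ exactly when a plateau happens. From \eqref{monot_main_ineq} and \eqref{monot_main_ineq2} alone one only gets $b_n\gtrsim n/2$. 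The paper closes this gap with two ingredients absent from your plan: Lemma \ref{lem_32} (if $b_n\ge 8$ and $b_{n+1}<c_n$ then $c_{n+1}>\tfrac32 b_n$, proved from the three-interval condition via \eqref{bn_ge_ineq_main}), and a dichotomy in the induction --- gain $+1$ per step via $b_{k+1}\ge c_k\ge b_k+1$ when $b_{k+1}\ge c_k$, gain $+2$ over two steps via Lemma \ref{lem_32} otherwise --- anchored at the computer-verified base $b_6\ge 8$, which supplies the slack a plateau consumes. Your sketch of the plateau case of \eqref{monot_main_ineq2} (``$z_n$ small, hence $\rho_{n+1}$ large'') points in the right direction but again defers exactly the quantitative lemma that would make it, and \eqref{bn_main_ineq}, go through.
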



The estimates in Theorems \ref{Th1} and 
\ref{Th2}
are non-improvable, as one can see from the Shulga decomposition 
\begin{equation*}
    531/629 = [0; 2, 2, 3, 5] + [0; 2, 3, 4, 8]\,,
\end{equation*}
and the inequalities \eqref{monot_main_ineq} and \eqref{monot_main_ineq2} have no analogues for the sequence $\{c_n\}_{n\geq1}$. For example, the Shulga decomposition
\begin{equation*}
28244/141973=[0;6,27]+[0;30,29]\,,    
\end{equation*}
shows that $\{c_n\}_{n\geq1}$ is not necessarily monotonic. This gives a negative answer to \cite[Problem 6.1]{shulga}. In fact, one can show that for every $l > 1$, there exists $\alpha$ such that $c_{l} < c_1$. Here is an example with $l=4$:
\begin{equation*}
9974074083712426/149649898029019789 = [0;16, 227, 231, 235]+[0;240, 229, 233, 237]\,.    
\end{equation*}
Numerical evidence suggest that \eqref{bn_main_ineq} can be improved for large $n$, and more specifically, that for every $\alpha$ the Shulga decomposition should satisfy $b_n> 2n$ for all $n$ large enough. However, the following theorem shows that a linear growth rate is best possible. 
\begin{thm}
\label{examp_thm}
There exists an irrational number such that the partial quotients of its Shulga decomposition \eqref{decomposition} satisfy the inequalities
\begin{equation}
\label{2n_3n}
4n-2\le b_{n} < c_n < 5n.
\end{equation}
\end{thm}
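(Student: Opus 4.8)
The plan is to construct the desired irrational $\alpha$ explicitly by prescribing the two sequences $\{b_n\}$ and $\{c_n\}$ and then defining $\alpha \defeq \beta + \gamma$ where $\beta = [0;b_1,b_2,\ldots]$ and $\gamma = [0;c_1,c_2,\ldots]$; the work is then to verify that running Shulga's algorithm on this $\alpha$ actually reproduces the prescribed sequences, and that the sequences obey \eqref{2n_3n}. A natural first attempt, motivated by the examples in the excerpt (e.g. $[0;16,227,231,235]+[0;240,229,233,237]$, where $c_k \approx b_k + 4$ and consecutive terms differ by roughly $4$), is to try $b_n = 4n-2$ and $c_n = 4n$, or some small perturbation thereof, and check that these are self-consistent under the recursion defining $b_{n+1}$ and $c_{n+1}$.

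The main tool is Theorem \ref{thm:shulga}, which tells us that for the genuine Shulga decomposition one has $b_k = a_k(\alpha - [0;c_1,\ldots,c_n])$ and $c_k = a_k(\alpha - [0;b_1,\ldots,b_n])$ for all $k \le n$. So the verification reduces to a statement purely about continued fractions: if $\beta = [0;b_1,b_2,\ldots]$ and $\gamma = [0;c_1,c_2,\ldots]$ with the prescribed (large, slowly growing) partial quotients, then $\gamma - [0;b_1,\ldots,b_n]$ has continued fraction expansion beginning $[0;c_1,\ldots,c_n,\ldots]$ — equivalently $\alpha - [0;b_1,\ldots,b_n] = \beta + (\gamma - [0;b_1,\ldots,b_n])$ has the right first $n$ partial quotients — and symmetrically with the roles swapped (keeping in mind the asymmetric "$+1$" in the definition of $b_{n+1}$). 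First I would record the standard estimate that $\beta - [0;b_1,\ldots,b_n]$ is a tail of size roughly $\prod_{k\le n} b_k^{-2}$ with a known sign alternating with the parity of $n$, and likewise for $\gamma$. Then I would analyze how adding a small, controlled quantity of a definite sign to $\beta$ perturbs its partial quotients: since all $b_k$ are comfortably larger than $1$, adding a quantity that is $O(\text{tail})$ and smaller in absolute value than the gap to the neighboring Stern–Brocot interval changes only the partial quotients from some index onward, and the first few remain exactly $b_1,\ldots,b_n$. The asymmetry between the two formulas (the extra $+1$ for $b$) is exactly what forces $c_n > b_n$ in general, and here it is what lets the two prescribed sequences differ by a constant rather than coincide; I would pick the explicit constants so that this bookkeeping closes.

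The key steps, in order: (1) choose explicit sequences, say $b_n = 4n-2$ and $c_n = b_n + \delta_n$ with $\delta_n$ a bounded correction, and set $\alpha = \beta+\gamma$; (2) prove by induction on $n$ that Shulga's algorithm run on this $\alpha$ does not terminate at step $n$ and produces exactly $b_{n+1}$ and $c_{n+1}$ as prescribed — this is where the continued-fraction perturbation lemma from step (1) above is invoked, estimating $a_{n+1}(\alpha - [0;c_1,\ldots,c_n])$ and $a_{n+1}(\alpha - [0;b_1,\ldots,b_n,b_{n+1}])$; (3) conclude $4n-2 = b_n < c_n < 5n$ from the chosen formulas; and (4) check $\alpha$ is irrational, which is automatic since $\beta$ and $\gamma$ have infinite continued fraction expansions and one argues their sum cannot be rational, or more simply one verifies the algorithm never halts, so by Corollary \ref{cor:main}'s dichotomy (or directly) $\alpha\notin\bbq$. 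The main obstacle I anticipate is step (2): controlling $a_{n+1}$ of the perturbed number requires a sharp two-sided estimate on the size of $\alpha - [0;b_1,\ldots,b_n]$ versus the length of the continued-fraction cylinder $[0;b_1,\ldots,b_n,c_{n+1}]$, and getting the inequalities to be strict and to propagate through the induction (rather than drifting) likely forces a careful choice of the correction terms $\delta_n$ — possibly making them depend on $n \bmod 2$ to cancel the alternating sign of the continued-fraction tails. If the clean choice $b_n=4n-2,\ c_n=4n$ does not quite close the induction, I would allow a finite modification of the first few partial quotients (which does not affect the asymptotic bounds) to absorb boundary effects.
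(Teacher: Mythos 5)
Your overall strategy --- prescribe $\{b_n\}$ and $\{c_n\}$, set $\alpha=[0;b_1,b_2,\ldots]+[0;c_1,c_2,\ldots]$, and verify by induction that the algorithm reproduces the prescribed digits --- is exactly the paper's, which carries out the verification by showing the intervals of \eqref{cn_bn_def} are nested. The gap lies in your choice of sequences and in the belief that a fixed arithmetic progression (possibly perturbed at finitely many initial indices, or by a bounded parity-dependent correction) can work. Write $p_n/q_n=[0;b_1,\ldots,b_n]$ and $s_n/t_n=[0;c_1,\ldots,c_n]$. By Lemma \ref{Cn_Bn_lem}\eqref{item:cnbn}, the algorithm can output the prescribed digits only if $q_n(q_n-q_{n-1})<t_{n-1}(t_n+t_{n-1})$, which is, up to lower-order terms, the requirement $\left(t_n/q_n\right)^2>c_n-2$; the nesting $B_n\supseteq C_n$ needs essentially $\left(t_n/q_n\right)^2>c_n-1$, while $C_n\supseteq B_{n+1}$ forces the opposite bound $\left(t_n/q_n\right)^2\lesssim c_n+1$. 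So $(t_n/q_n)^2$ must track $c_n$ inside a window of bounded (essentially unit) length for \emph{every} $n$. For your clean choice $b_n=4n-2$, $c_n=4n$ one has $t_n/q_n\approx\prod_{k=1}^n\tfrac{2k}{2k-1}\sim\sqrt{\pi n}$ (up to a convergent correction factor), hence $(t_n/q_n)^2\sim Cn$ with $C$ close to $\pi<4$, and indeed the intersection condition already fails at $n=3$: with $q_2=13$, $q_3=132$, $t_2=33$, $t_3=400$ one gets $q_3(q_3-q_2)=15708$ while $t_2(t_3+t_2)=14289$, so $B_3\cap C_3=\emptyset$. Consequently $[0;2,6,10,\ldots]+[0;4,8,12,\ldots]$ is \emph{not} its own Shulga decomposition: your implicit assumption that writing $\alpha=\beta+\gamma$ with prescribed expansions forces the algorithm to recover $\beta$ and $\gamma$ is precisely what breaks down.

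Neither of your fallbacks repairs this. Any finite initial modification, or any bounded periodic rule for $\delta_n=c_n-b_n$, yields $(t_n/q_n)^2=Cn+O(1)$ with $C$ determined by finitely many discrete choices, and you would need $C$ to equal the slope of $c_n$ exactly \emph{and} the $O(1)$ term to remain in a unit window for all $n$ simultaneously --- an infinitely precise tuning unavailable to discrete data. The missing idea is an \emph{adaptive} selection with feedback: the paper takes $b_{n+1}=c_n+2$ but chooses $c_{n+1}\in\{b_{n+1}+2,\,b_{n+1}+3\}$ at each step according to whether the invariant $0<c_{n+1}-(t_{n+1}/q_{n+1})^2<1$ is preserved (see \eqref{eq:the_definition2} and Lemma \ref{cn_cn1_2}); this self-correcting choice is what keeps $(t_n/q_n)^2$ in the required window while still giving $4n-2\le b_n<c_n<5n$. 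With that mechanism supplied, the remainder of your outline (induction on the steps, irrationality via non-termination and Corollary \ref{cor:main}) does go through.
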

Theorem \ref{examp_thm} is proved by an explicit construction. 

\begin{remark}
    It is possible to deduce that the number constructed in the proof of Theorem \ref{examp_thm} satisfies $c_n < (4+\varepsilon)n$ for every $\varepsilon>0$ and all $n$ large enough. However, we decided to settle with the estimate \eqref{2n_3n} in order to simplify the proof.
\end{remark}


\section{Basic properties of continued fractions}
\label{prelim_sec}
In this section, we recall some basic
facts about continued fractions that we use in the article. For a more detailed background, we refer to the classical literature on the subject (e.g., \cite{khinchin, rock_sz,schmidt}). Any real number $\alpha$ can be represented as a continued fraction as follows
\begin{equation*}
\alpha=[a_0;a_1,a_2,\ldots]\defeq a_0+ \cfrac{1}{a_1+\cfrac{1}{a_2+\cdots}}\,,
\end{equation*}
where $a_0\in\mathbb{Z}$ and $a_i\in\mathbb{Z}_{+}$ for $i\ge 1$. This representation is finite for rational $\alpha$, and infinite and unique for irrational $\alpha$. Each rational number has exactly two different representations
\begin{equation*}
[a_0;a_1,a_2,\ldots,a_n]=[a_0;a_1,a_2,\ldots,a_n-1,1]\,,\quad a_n\ge 2\,.
\end{equation*}
Thus, without loss of generality we will assume that the last partial quotient of rational numbers is greater than $1$. 

For any $n\ge1$ we consider $a_n(\cdot)$ as a function $\mathbb{R}\to\mathbb{N}$ that returns the $n$th partial quotient of the continued fraction expansion. Of course, if $\alpha$ is rational, then $n$ should not exceed the length of the continued fraction expansion of $\alpha$; otherwise, $a_n(\alpha)$ is undefined.

The convergents of a continued fraction are the rational numbers 
$$
\frac{p_n(\alpha)}{q_n(\alpha)}:=[a_0;a_1,a_2,\ldots,a_n]
$$
whose numerators and denominators are defined by the recurrence relations
\begin{equation}\label{eq:convergents}
\begin{aligned}
    p_0(\alpha)&=a_0\,,\; &p_1(\alpha)&=a_1a_0+1\,,\;
    &p_{n}(\alpha)&=a_{n}p_{n-1}(\alpha)+p_{n-2}(\alpha)\,,\quad\textrm{and}\\
    q_0(\alpha)&=1\,,\;&q_1(\alpha)&=a_1\,,\;
    &q_{n}(\alpha)&=a_{n}q_{n-1}(\alpha)+q_{n-2}(\alpha)\,.
\end{aligned}
\end{equation}
The following properties are well known
\begin{equation}\label{eq:common1}
    q_n(\alpha)p_{n-1}(\alpha)-p_n(\alpha)q_{n-1}(\alpha) = (-1)^n\,,
\end{equation}
\begin{equation}\label{eq:common}
    \frac{q_n(\alpha)}{q_{n-1}(\alpha)} = [a_n;a_{n-1},\ldots,a_{1}]\,.
\end{equation}

It is well known that the sequence of even convergents $\frac{p_{2n}(\alpha)}{q_{2n}(\alpha)}$ is monotonically increasing, converging to $\alpha$ from the left, while the sequence of odd convergents $\frac{p_{2n-1}(\alpha)}{q_{2n-1}(\alpha)}$ is monotonically decreasing, converging to $\alpha$ from the right. In our argument, we will frequently use intervals with endpoints of the form $\frac{p_n}{q_n}$. The order of those endpoints will depend on the parity of $n$. To avoid considering multiple cases, we will use the following convention:
\begin{convention}\label{conv}
    For any $x,y,z\in\bbr$, the \emph{half-open-half-closed interval whose endpoints are $x+z$ and $y+z$} is 
    $$
    [x,y)+z \defeq \begin{cases}
        \{\alpha\in\bbr\sep x+z\leq\alpha < y+z\} &\text{if $x < y$},\\
        \{\alpha\in\bbr\sep y+z<\alpha \leq x+z\} &\text{if $x > y$}.
    \end{cases}
    $$
\end{convention}
Let $a_1,a_2,\ldots, a_n$ be arbitrary positive integers. Then the set of real numbers whose continued fraction expansion begins with $a_1,\ldots,a_n$ is the interval
\begin{equation}\label{cyl_def}
\left\{\alpha\in [0,1] \sep a_k(\alpha)=a_k \textrm{ for every $1\leq k\leq n$}\right\} = 
\bigl[[0;a_1,\ldots,a_n]\,, [0;a_1,\ldots,a_n+1]\bigr)\,.
\end{equation}
Note that 
\begin{equation*}\label{eq:order}
\begin{aligned}
\relax
    [0;a_1,\ldots,a_n]&<[0;a_1,\ldots,a_n+1]\,,\quad\textrm{for even $n$, and} \\
    [0;a_1,\ldots,a_n]&>[0;a_1,\ldots,a_n+1]\,,\quad\textrm{for odd $n$},
\end{aligned}
\end{equation*}
so for any $n$ odd Convention \ref{conv} is applied in \eqref{cyl_def}. Lastly, recall that for any $a_1,\ldots,a_{n-1}$ and $a$ integers, the length $n$ continued fraction $[0;a_1,\ldots,a_{n-1},a]$ converges monotonically to $[0;a_1,\ldots,a_{n-1}]$ as $a$ grows to infinity. In fact,
\begin{equation}\label{eq:mon}
\begin{aligned}
\relax
    [0;a_1,\ldots,a_{n-1},a]&\underset{a\to\infty}{\nearrow}[0;a_1,\ldots,a_{n-1}]\,,\textrm{ for even $n$, and} \\[2pt]
    [0;a_1,\ldots,a_{n-1},a]&\underset{a\to\infty}{\searrow}[0;a_1,\ldots,a_{n-1}]\,,\textrm{ for odd $n$}.
\end{aligned}
\end{equation}

\section{Shulga intervals and their intersections}
\label{sec_intersec}
In this section we rephrase the Shulga algorithm in Definition \ref{def:shulga} in terms of intersection of certain intervals. Analyzing the arrangements of these intervals is the key for many of our results. Along the way we will rephrase some results and proofs from \cite{shulga}. In particular, the analysis that appears in this section does not rely on Theorem \ref{thm:shulga} and offers a self-contained proof of it.

For every $n\geq1$ and positive integers $b_1,\ldots,b_n\geq2$ and $c_1,\ldots,c_n$, define for every $1\leq k \leq n$ the intervals
\begin{equation}
\begin{aligned}
\label{cn_bn_def}
B_{k}&\defeq\bigl[[0;b_1,\ldots,b_{k}-1],[0;b_1,\ldots,b_{k}]\bigr)+[0;c_1,\ldots,c_{k-1}]\,,\\
C_{k}&\defeq\bigl[[0;c_1,\ldots,c_{k}],[0;c_1,\ldots,c_{k}+1]\bigr)+[0;b_1,\ldots,b_{k}]\,.
\end{aligned}
\end{equation}
Note that for odd $k$ the left and right endpoints of these intervals interchange according to our Convention \ref{conv}. Also, define
\begin{equation*}
    A_n\defeq\bigcap\limits_{k=1}^{n}(B_k\cap C_k)\,,
\end{equation*}
and set $A_0\defeq B_0\defeq C_0\defeq [0,1]$.
\begin{lem}
\label{large_intersec_lem}
For every $n\geq0$ and positive integers $b_1,\ldots,b_n\geq2$ and $c_1,\ldots,c_n$, if 
$
\alpha\in A_n
$
then the first $n$ steps in Definition \ref{def:shulga} are well defined and yield the two sequences $b_1,\ldots,b_{n}$ and $c_1,\ldots,c_{n}$.
\end{lem}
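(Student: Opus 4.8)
The plan is to prove the lemma by induction on $n$, the base case $n=0$ being vacuous since $A_0=[0,1]$ and no steps of the algorithm have been performed. The heart of the argument is to reformulate membership in $B_k$ and $C_k$ as statements about continued fraction digits. First I would record the elementary identity $[0;b_1,\ldots,b_{k-1},(b_k-1)+1]=[0;b_1,\ldots,b_{k-1},b_k]$ which, together with Convention \ref{conv} to absorb the parity of $k$, identifies the interval appearing in the definition of $B_k$ with the cylinder from \eqref{cyl_def} corresponding to the digit string $b_1,\ldots,b_{k-1},b_k-1$; here $b_k-1\ge1$ because $b_k\ge 2$. Hence $\alpha\in B_k$ if and only if the first $k$ partial quotients of $\alpha-[0;c_1,\ldots,c_{k-1}]$ are $b_1,\ldots,b_{k-1},b_k-1$, and --- reading \eqref{cyl_def} directly --- $\alpha\in C_k$ if and only if the first $k$ partial quotients of $\alpha-[0;b_1,\ldots,b_k]$ are $c_1,\ldots,c_k$. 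In particular, membership in $B_k$ or $C_k$ already forces the relevant translate of $\alpha$ to lie in $(0,1)$, so no extra hypotheses are needed to invoke \eqref{cyl_def}.

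For the inductive step, assume the statement for $n$ and take $\alpha\in A_{n+1}\subseteq A_n$. By the inductive hypothesis the first $n$ steps of Definition \ref{def:shulga} are well defined and output $b_1,\ldots,b_n$ and $c_1,\ldots,c_n$, so only step $n+1$ remains to be checked. From $\alpha\in B_{n+1}$ the reformulation gives $a_{n+1}(\alpha-[0;c_1,\ldots,c_n])=b_{n+1}-1$; in particular this partial quotient is defined, so $\alpha-[0;c_1,\ldots,c_n]$ has a continued fraction expansion of length at least $n+1$ and thus differs from $[0;b_1,\ldots,b_n]$, whose canonical expansion has length $n$. Consequently $\alpha\ne[0;b_1,\ldots,b_n]+[0;c_1,\ldots,c_n]$, the termination test at step $n+1$ fails, and the algorithm assigns $b_{n+1}^{\mathrm{alg}}=a_{n+1}(\alpha-[0;c_1,\ldots,c_n])+1=b_{n+1}$.

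Next, because $b_{n+1}^{\mathrm{alg}}=b_{n+1}$, the algorithm assigns $c_{n+1}^{\mathrm{alg}}=a_{n+1}(\alpha-[0;b_1,\ldots,b_n,b_{n+1}])$, and $\alpha\in C_{n+1}$ together with the second reformulation yields $a_{n+1}(\alpha-[0;b_1,\ldots,b_{n+1}])=c_{n+1}$, so $c_{n+1}^{\mathrm{alg}}=c_{n+1}$. Therefore the first $n+1$ steps are well defined and produce $b_1,\ldots,b_{n+1}$ and $c_1,\ldots,c_{n+1}$, which closes the induction. I expect the only genuinely delicate point to be the termination test: ruling out $\alpha-[0;c_1,\ldots,c_n]=[0;b_1,\ldots,b_n]$ rests on the observation that a number with a well-defined $(n+1)$-st partial quotient cannot coincide with one whose canonical continued fraction stops at the $n$-th term, and one should spend a line checking that the alternative representation $[0;b_1,\ldots,b_n-1,1]$ of the latter does not interfere, since its first $n$ digits $b_1,\ldots,b_{n-1},b_n-1$ already differ from $b_1,\ldots,b_n$. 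Everything else is bookkeeping with \eqref{cyl_def} and Convention \ref{conv}.
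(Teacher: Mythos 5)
Your proof is correct and follows essentially the same route as the paper's: an induction in which membership in $B_{k+1}$ and $C_{k+1}$ is translated via \eqref{cyl_def} into statements about the $(k+1)$th partial quotients of $\alpha-[0;c_1,\ldots,c_k]$ and $\alpha-[0;b_1,\ldots,b_{k+1}]$, which forces the algorithm not to terminate and to output $b_{k+1}$ and $c_{k+1}$. Your extra care about the termination test and the two representations of rationals is a slightly more explicit version of the paper's remark that $\alpha-[0;c_1,\ldots,c_k]\neq[0;b_1,\ldots,b_k]$, but the substance is identical.
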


\begin{proof}
By induction on $0\leq k\leq n-1$. The base case $k=0$ is trivial. Assume that the first $k$ steps in Definition \ref{def:shulga} are $b_1,\ldots,b_{k}$ and $c_1,\ldots,c_{k}$. Since $\alpha\in B_{k+1}$ one has
\begin{equation*}
\alpha-[0;c_1,\ldots,c_{k}]\in\bigl[[0;b_1,\ldots,b_{k+1}-1],[0;b_1,\ldots,b_{k+1}] \bigr),
\end{equation*}
and hence by \eqref{cyl_def} the $(k+1)$th partial quotient of $\alpha-[0;c_1,\ldots,c_{k}]$ is well defined and satisfies
$$
b_{k+1}=a_{k+1}(\alpha-[0;c_1,\ldots,c_{k}])+1\,.
$$ 
In particular, we have $\alpha-[0;c_1,\ldots,c_{k}]\neq[0;b_1,\ldots,b_{k}]$, and the $(k+1)$th digit of the first sequence in Definition \ref{def:shulga} is well defined and equals $b_{k+1}$. Similarly, since $\alpha\in C_{k+1}$, one has
\begin{equation*}
\alpha-[0;b_1,\ldots,b_{k+1}]\in\bigl[[0;c_1,\ldots,c_{k+1}],[0;c_1,\ldots,c_{k+1}+1]\bigr)
\end{equation*}
and hence by \eqref{cyl_def} the $(k+1)$th partial quotient of $\alpha-[0;b_1,\ldots,b_{k+1}]$ is well defined and satisfies
$$
c_{k+1}=a_{k+1}(\alpha-[0;b_1,\ldots,b_{k+1}])\,,$$
which again agrees with Definition \ref{def:shulga}. This finishes the induction.
\end{proof}

The following lemma plays a key role in the proofs of Theorems \ref{Th1} and \ref{Th2}. Only the last part of it is new and the rest were established by Shulga \cite{shulga}, who used equivalent formulations. However, in order to practice the usefulness of Lemma \ref{large_intersec_lem}, we will provide a self-contained proof for all five parts. 

\begin{lem}
\label{Cn_Bn_lem}
Suppose that $b_1,\ldots,b_n\geq2$ and $c_1,\ldots,c_n$ are positive integers, and let 
\begin{equation}
    \label{eq:fractions}
    \frac{p_k}{q_k} \defeq [0;b_1,\ldots,b_k] \,,\quad \textrm{and}\quad \frac{s_k}{t_k} \defeq [0;c_1,\ldots,c_k]
\end{equation}
be the numerator and denominator of the corresponding continued fraction. We have:
\begin{enumerate}[wide, labelindent=0pt]
    \item \label{item:cnbn}$B_n\cap C_n\ne\emptyset$ if and only if
        \begin{equation}
        \label{CnBn_ineq1}
        \frac{1}{t_{n-1}(t_n+t_{n-1})}<\frac{1}{q_n(q_n-q_{n-1})}\,.
        \end{equation}
    \item \label{item:cncn1}
    $C_n\cap C_{n-1}\ne\emptyset$ 
    if and only if \begin{equation}
        \label{CnCn1_ineq0}
        \frac{1}{q_{n}q_{n-1}}<\frac{c_{n}}{(t_{n}+t_{n-1})(t_{n-1}+t_{n-2})}\,.
    \end{equation}
    \item \label{item:bncn1_lem}
        $B_n\cap C_{n-1}\ne\emptyset$ if and only if
        \begin{equation*}\label{bncn1_ineq04}
            \frac{1}{q_nq_{n-1}}<\frac{1}{t_{n-1}(t_{n-1}+t_{n-2})}\,.
        \end{equation*}
    \item \label{item:cn_gt_lem}
        If $B_n\cap C_n\cap C_{n-1}\ne\emptyset$ then
        \begin{equation*}
        \label{CnCn1_ineq03}
                c_{n}>(1+[0;c_{n-1},\ldots,c_1])(b_{n}-1+[0;b_{n-1},\ldots,b_1])\,.
        \end{equation*}
    \item \label{bn_gt_lem}
    If $B_{n-1}\cap C_n\cap C_{n-1}\ne\emptyset$ then
    \begin{equation}\label{bn_ge_ineq_main}
        b_n>\left(1+\frac{1}{c_n}\right)\left(c_{n-1}+1+[0;c_{n-2},\ldots,c_1]\right)(1-[0;b_{n-1},\ldots,b_1])-1\,.
    \end{equation}
\end{enumerate}
\end{lem}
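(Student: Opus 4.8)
The plan is to translate each assertion ``$X\cap Y\neq\emptyset$'' into a single inequality between the endpoints of $X$ and $Y$. Every interval in \eqref{cn_bn_def} is of ``cylinder type'': it has one closed endpoint, namely the convergent $[0;\dots]$ (for $C_k$) or $[0;\dots,b_k-1]$ (for $B_k$), and one open endpoint, namely $[0;\dots,c_k+1]$ (for $C_k$) or $[0;\dots,b_k]$ (for $B_k$). By Convention \ref{conv} and \eqref{eq:order}, which of these is the left and which the right endpoint depends on the parity of $k$, but the closed/open labelling does not. Hence two such intervals meet exactly when the larger of their two lower endpoints is $\leq$ the smaller of their two upper endpoints, and this inequality may be taken strict except when the two endpoints being compared are both closed. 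I would record this elementary dichotomy once and for all, so that the parity bookkeeping (otherwise pervasive) is dealt with uniformly.

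With that in hand, items \ref{item:cnbn}--\ref{item:bncn1_lem} follow from direct endpoint computations. In each case one of the two endpoint comparisons is automatic: after cancelling the common translate it reduces, via \eqref{eq:convergents} and \eqref{eq:common1} (applied to $p_k,q_k$ and to $s_k,t_k$, and for the neighbouring indices created by $b_k\pm1$), to an inequality of the form $\tfrac{(-1)^m}{q_mq_{m-1}}$, or its $t$-analogue, having the correct sign, i.e.\ to $\tfrac{1}{(\text{positive})}>0$. The other, binding, comparison is ``$(\text{smaller upper endpoint})-(\text{larger lower endpoint})>0$''; subtracting the two fractions, using \eqref{eq:common1} to collapse the cross terms, and clearing positive denominators produces exactly \eqref{CnBn_ineq1}, \eqref{CnCn1_ineq0}, and the inequality of item \ref{item:bncn1_lem}. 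The same procedure applied to the pair $B_{n-1},C_n$ (needed below) yields the auxiliary inequality $t_{n-2}(t_n+t_{n-1})<(c_n+1)q_{n-1}q_n$ together with the binding one
\begin{equation*}
    (b_n+1)\,t_nt_{n-2}\ \geq\ c_n\,q_n\,(q_{n-1}-q_{n-2})\,,
\end{equation*}
non-strict because here the binding comparison is between two closed endpoints.

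For items \ref{item:cn_gt_lem} and \ref{bn_gt_lem} I would use that a nonempty triple intersection of intervals forces each pairwise intersection to be nonempty, invoke the items already proved for the relevant pairs, and finish with \eqref{eq:common}. For item \ref{item:cn_gt_lem}: $B_n\cap C_n\neq\emptyset$ and $C_n\cap C_{n-1}\neq\emptyset$ give $q_n(q_n-q_{n-1})<t_{n-1}(t_n+t_{n-1})$ and $(t_n+t_{n-1})(t_{n-1}+t_{n-2})<c_nq_nq_{n-1}$; multiplying and cancelling $(t_n+t_{n-1})$ yields $(q_n-q_{n-1})(t_{n-1}+t_{n-2})<c_nt_{n-1}q_{n-1}$, and dividing by $t_{n-1}q_{n-1}$ and reading $\tfrac{q_n-q_{n-1}}{q_{n-1}}=b_n-1+[0;b_{n-1},\dots,b_1]$ and $\tfrac{t_{n-1}+t_{n-2}}{t_{n-1}}=1+[0;c_{n-1},\dots,c_1]$ via \eqref{eq:common} gives the claim. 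For item \ref{bn_gt_lem}: the triple $B_{n-1}\cap C_n\cap C_{n-1}\neq\emptyset$ supplies, besides \eqref{CnCn1_ineq0} (from $C_n\cap C_{n-1}$) and the instance of \eqref{CnBn_ineq1} at index $n-1$ (from $B_{n-1}\cap C_{n-1}$), the $b_n$-inequality displayed above (from $B_{n-1}\cap C_n$); combining these and then rewriting $c_{n-1}+1+[0;c_{n-2},\dots,c_1]=\tfrac{t_{n-1}+t_{n-2}}{t_{n-2}}$, $\,1-[0;b_{n-1},\dots,b_1]=\tfrac{q_{n-1}-q_{n-2}}{q_{n-1}}$ and $\,1+\tfrac1{c_n}=\tfrac{c_n+1}{c_n}$ via \eqref{eq:common} should yield \eqref{bn_ge_ineq_main}.

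I expect the assembly of \eqref{bn_ge_ineq_main} to be the real obstacle. Unlike item \ref{item:cn_gt_lem}, this bound is asymmetric — the factor $1+\tfrac1{c_n}$ has no analogue there — so the combination of \eqref{CnCn1_ineq0} with the $B_{n-1}\cap C_n$ inequality must be arranged just right; I anticipate this requires keeping track of the slack in the strict inequalities and of how the integer recursions $q_n=b_nq_{n-1}+q_{n-2}$, $t_n=c_nt_{n-1}+t_{n-2}$ interact with that slack (a naive multiplication of the two inequalities loses a factor $\tfrac{c_n+1}{c_n}$). The rest — the endpoint computations and the Convention-driven sign analysis — is routine once the interval dichotomy of the first paragraph has been isolated.
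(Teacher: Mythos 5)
Your proposal follows the paper's route almost exactly: items \ref{item:cnbn}--\ref{item:bncn1_lem} by endpoint differences collapsed via \eqref{eq:convergents} and \eqref{eq:common1}, and item \ref{item:cn_gt_lem} by multiplying \eqref{CnBn_ineq1} with \eqref{CnCn1_ineq0}, cancelling $(t_n+t_{n-1})$, and translating through \eqref{eq:common} --- this is literally the paper's computation and is correct.

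The one place you stop short is item \ref{bn_gt_lem}, and the obstacle you anticipate there is illusory. You do not need the third pairwise intersection $B_{n-1}\cap C_n$ at all: the ``naive'' multiplication of \eqref{CnCn1_ineq0} (from $C_n\cap C_{n-1}$) with \eqref{CnBn_ineq1} at index $n-1$ (from $B_{n-1}\cap C_{n-1}$), after cancelling $q_{n-1}(t_{n-1}+t_{n-2})$, gives
\begin{equation*}
(t_n+t_{n-1})(q_{n-1}-q_{n-2})<c_n\,q_n\,t_{n-2}\,,
\qquad\text{i.e.}\qquad
\frac{q_n}{q_{n-1}-q_{n-2}}>\frac{t_n+t_{n-1}}{c_n\,t_{n-2}}\,,
\end{equation*}
and the factor $1+\tfrac1{c_n}$ that you feared losing is produced for free by the recurrence: $t_n+t_{n-1}=(c_n+1)t_{n-1}+t_{n-2}$, so the right-hand side equals $\left(1+\tfrac1{c_n}\right)[c_{n-1};c_{n-2},\ldots,c_1]+\tfrac1{c_n}$. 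Writing the left-hand side as $\tfrac{b_n+[0;b_{n-1},\ldots,b_1]}{1-[0;b_{n-1},\ldots,b_1]}$ via \eqref{eq:common} and clearing the denominator yields \eqref{bn_ge_ineq_main} exactly; no tracking of slack and no use of your auxiliary $B_{n-1}\cap C_n$ inequality (which, incidentally, you computed correctly) is required. So the gap is only that you left the final assembly unexecuted and misdiagnosed where the factor $\tfrac{c_n+1}{c_n}$ comes from; the argument itself goes through as you set it up.
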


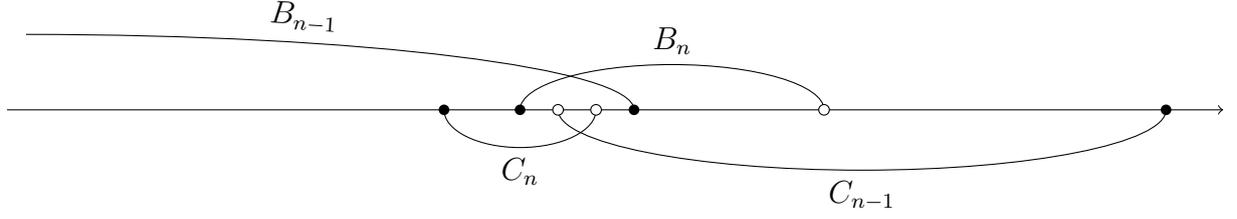
\begin{figure}
\centering
\scalebox{1}{
\begin{tikzpicture}
    \draw[->] (0,0) -- (16,0);
    \draw (5.75,0) arc (180:360:1 and 0.5) node[pos=.5,below,sloped] () {$C_{n}$};
    \fill (5.75,0) circle (0.07);
    \fill[white] (7.75,0) circle (0.07);
    \draw (7.75,0) circle (0.07);
    \draw (7.25,0) arc (180:360:4 and 0.8) node[pos=.5,below,sloped] () {$C_{n-1}$};
    \fill[white] (7.25,0) circle (0.07);
    \draw (7.25,0) circle (0.07);
    \fill (15.25,0) circle (0.07);
    \draw (10.75,0) arc (0:180:2 and 0.6) node[pos=.5,above,sloped] () {$B_{n}$};
    \fill[white] (10.75,0) circle (0.07);
    \draw (10.75,0) circle (0.07);
    \fill (6.75,0) circle (0.07);
    \draw (8.25,0) arc (0:90:8 and 1) node[pos=.7,above,sloped] () {$B_{n-1}$};
    \fill (8.25,0) circle (0.07);
\end{tikzpicture}
}
\caption{A possible arrangement of the intervals $B_{n-1},C_{n-1},B_n,C_n$ for $n$ even. An endpoint is filled or empty according to whether the endpoint is within the interval or not.}
\label{fig:intervals}
\end{figure}

\begin{proof}
Without loss of generality we assume that $n$ is even. Indeed, otherwise, the argument goes through by applying Convention \ref{conv} to all the intervals in the argument.
\begin{enumerate}[wide,labelindent=0pt]
    \item \label{item:cnbn_proof}Recall the definition \eqref{cn_bn_def} with $k=n$
    \begin{equation*}\label{eq:bncn}
        \begin{aligned}
            B_{n}&=\bigl[[0;b_1,\ldots,b_{n}-1], [0;b_1,\ldots,b_{n}]\bigr)+[0;c_1,\ldots,c_{n-1}]\,,\\
            C_{n}&=\bigl[[0;c_1,\ldots,c_{n}],[0;c_1,\ldots,c_{n}+1]\bigr)+[0;b_1,\ldots,b_{n}]\,.
        \end{aligned}
    \end{equation*}
    Since $[0;c_1,\ldots,c_{n}+1]<[0;c_1,\ldots,c_{n-1}]$, we have that $B_n\cap C_n\neq\emptyset$ is equivalent to the fact that the left endpoint of $B_n$ is less than the right endpoint of $C_n$ (see Figure \ref{fig:intervals}). In other words, to
    \begin{equation*}
        [0;b_1,\ldots,b_{n}-1]+[0;c_1,\ldots,c_{n-1}]< [0;b_1,\ldots,b_{n}]+[0;c_1,\ldots,c_{n}+1]\,.
    \end{equation*}
    This is equivalent to
    \begin{equation}
    \label{CnBn_ineq1_expl}
    [0;c_1,\ldots,c_{n-1}]-[0;c_1,\ldots,c_{n}+1]<[0;b_1,\ldots,b_{n}]-[0;b_1,\ldots,b_{n}-1]\,.
    \end{equation}
A direct computation shows that the definition \eqref{eq:fractions} implies that
\begin{equation}\label{eq:direct1}
\begin{aligned}
\relax
    [0;b_1,\ldots,b_{n}]-[0;b_1,\ldots,b_{n}-1]
    &=
    \frac{p_n}{q_n}-\frac{p_n-p_{n-1}}{q_n-q_{n-1}}
    \\[2pt]
    &=
    \frac{(q_n-q_{n-1})p_n - q_n(p_n-p_{n-1})}{q_n(q_n-q_{n-1})}
    \\[2pt]
    &=
    \frac{-q_{n-1}p_n + q_np_{n-1}}{q_n(q_n-q_{n-1})}
    \\[2pt]
    &=
    \frac{1}{q_n(q_n-q_{n-1})}\,,
\end{aligned}
\end{equation}
where the last equality holds by \eqref{eq:common1} since $n$ is even, and, similarly,
\begin{equation}\label{eq:direct2}
\begin{aligned}
\relax
    [0;c_1,\ldots,c_{n-1}]-[0;c_1,\ldots,c_{n}+1]
    &=
    \frac{s_{n-1}}{t_{n-1}}-\frac{s_n+s_{n-1}}{t_n+t_{n-1}}
    \\[2pt]
    &=
    \frac{(t_n+t_{n-1})s_{n-1} - t_{n-1}(s_n+s_{n-1})}{t_{n-1}(t_n+t_{n-1})}
    \\[2pt]
    &=
    \frac{t_{n}s_{n-1} - t_{n-1}s_{n}}{t_{n-1}(t_n+t_{n-1})}
    \\[2pt]
    &=
    \frac{1}{t_{n-1}(t_n+t_{n-1})}\,.
\end{aligned}
\end{equation}
Finally, equations \eqref{eq:direct1} and \eqref{eq:direct2} show that \eqref{CnBn_ineq1_expl} is equivalent to \eqref{CnBn_ineq1}.

\item Definition \eqref{cn_bn_def} with $k=n-1$ gives 
\begin{align*}
C_{n-1}&=\bigl[[0;c_1,\ldots,c_{n-1}], [0;c_1,\ldots,c_{n-1}+1]\bigr)+[0;b_1,\ldots,b_{n-1}]\,.
\end{align*}
Since $[0;c_1,\ldots,c_{n}+1]<[0;c_1,\ldots,c_{n-1}]$ and $[0;b_1,\ldots,b_{n}]<[0;b_1,\ldots,b_{n-1}]$, we have that $C_n\cap C_{n-1}\neq\varnothing$ is equivalent to (see Figure \ref{fig:intervals})
\begin{equation*}
[0;b_1,\ldots,b_{n-1}]+[0;c_1,\ldots,c_{n-1}+1]<[0;b_1,\ldots,b_{n}]+[0;c_1,\ldots,c_{n}+1]\,.
\end{equation*}
This inequality is equivalent to
\begin{equation}\label{CnCn1_ineq0_full}
[0;b_1,\ldots,b_{n-1}]-[0;b_1,\ldots,b_{n}]<[0;c_1,\ldots,c_{n}+1]-[0;c_1,\ldots,c_{n-1}+1]\,.
\end{equation}
As in the proof of Lemma \ref{Cn_Bn_lem}\eqref{item:cnbn}, a direct computation shows that
\begin{equation}\label{eq:direct3}
\begin{aligned}
\relax
[0;b_1,\ldots,b_{n-1}]-[0;b_1,\ldots,b_{n}] &= \frac{1}{q_{n}q_{n-1}}\,,
\end{aligned}
\end{equation}
and
\begin{equation}\label{eq:direct4}
\begin{aligned}
&
[0;c_1,\ldots,c_{n}+1]-[0;c_1,\ldots,c_{n-1}+1] = 
\\
&
[0;c_1,\ldots,c_{n-1}] - [0;c_1,\ldots,c_{n-1}+1] - ([0;c_1,\ldots,c_{n-1}] - [0;c_1,\ldots,c_{n}+1])=
\\[2pt]
&
\frac{1}{t_{n-1}(t_{n-1}+t_{n-2})}-\frac{1}{t_{n-1}(t_{n-1}+t_n)} =
\\[2pt]
&
\frac{1}{t_{n-1}}\left(\frac{t_{n}-t_{n-2}}{(t_{n}+t_{n-1})(t_{n-1}+t_{n-2})}\right)=
\frac{c_{n}}{(t_{n}+t_{n-1})(t_{n-1}+t_{n-2})}
\,.
\end{aligned}
\end{equation}
Finally, equations \eqref{eq:direct3} and \eqref{eq:direct4} show that \eqref{CnCn1_ineq0_full} is equivalent to \eqref{CnCn1_ineq0}.

\item We have that $B_{n}\cap C_{n-1}\ne\emptyset$ is equivalent to
\begin{equation*}
[0;b_1,\ldots,b_{n-1}]+[0;c_1,\ldots,c_{n-1}+1]<[0;b_1,\ldots,b_n]+[0;c_1,\ldots,c_{n-1}]\,,
\end{equation*}
which is equivalent to 
\begin{equation*}
\label{c_n_bn1}
\frac{1}{q_{n}q_{n-1}}<\frac{1}{t_{n-1}(t_{n-1}+t_{n-2})}\,.
\end{equation*}

\item Combining \eqref{CnBn_ineq1} and \eqref{CnCn1_ineq0} gives
\begin{equation}\label{CnCn1_ineq02}
\begin{aligned}
\frac{1}{t_{n-1}(t_{n}+t_{n-1})}\frac{q_{n}-q_{n-1}}{q_{n-1}} 
&<
\frac{1}{q_{n}(q_{n}-q_{n-1})}\frac{q_{n}-q_{n-1}}{q_{n-1}}
\\[2pt]
&=
\frac{1}{q_{n-1}q_{n}}<\frac{c_{n}}{(t_{n-1}+t_{n})(t_{n-1}+t_{n-2})}\,.
\end{aligned}
\end{equation}
By applying \eqref{eq:common}, the inequality \eqref{CnCn1_ineq02} is equivalent to
\begin{equation*}
\label{CnCn1_ineq03_full}
c_{n}>\frac{t_{n-1}+t_{n-2}}{t_{n-1}}\frac{q_{n}-q_{n-1}}{q_{n-1}}=(1+[0;c_{n-1},\ldots,c_1])(b_{n}-1+[0;b_{n-1},\ldots,b_1])\,.
\end{equation*}
\item Applying \ref{CnBn_ineq1} with $n-1$ instead of $n$ yields
\begin{equation}
\label{Cn_min1Bn_min1_ineq1}
\frac{1}{t_{n-2}(t_{n-1}+t_{n-2})} < \frac{1}{q_{n-1}(q_{n-1}-q_{n-2})}\,.
\end{equation}
Combining \eqref{CnCn1_ineq0_full} and \eqref{Cn_min1Bn_min1_ineq1} gives
\begin{equation*}
\begin{aligned}
\frac{1}{q_{n}q_{n-1}}<\frac{1}{t_{n-1}(t_{n-1}+t_{n-2})}\frac{c_nt_{n-1}}{t_n+t_{n-1}}&=
\frac{1}{t_{n-2}(t_{n-1}+t_{n-2})}\frac{t_{n-2}}{t_{n-1}}\frac{c_nt_{n-1}}{t_n+t_{n-1}}
\\[2pt]
&
<
\frac{1}{q_{n-1}(q_{n-1}-q_{n-2})}\frac{c_n t_{n-2}}{t_n+{t_{n-1}}}\,.
\end{aligned}
\end{equation*}
Hence, by \eqref{eq:common},
\begin{equation*}\label{bn_gt_interm_ineq}
\begin{aligned}
\frac{q_{n}}{q_{n-1}}\frac{q_{n-1}}{q_{n-1}-q_{n-2}}>\frac{t_{n}+t_{n-1}}{c_n{t_{n-2}}}&=\frac{(c_{n}+1)t_{n-1}+t_{n-2}}{c_n{t_{n-2}}}
\\[2pt]
&=
\left(1+\frac{1}{c_n}\right)[c_{n-1};c_{n-2},\ldots,c_1]+\frac{1}{c_n}\,.
\end{aligned}
\end{equation*}
Applying \eqref{eq:common} once again results in
\begin{equation*}
\begin{aligned}
\frac{[b_n;b_{n-1},\ldots,b_1]}{1-[0;b_{n-1},\ldots,b_1]}&>
\left(1+\frac{1}{c_n}\right)(c_{n-1}+1+[0;c_{n-2},\ldots,c_1])-1\,,
\end{aligned}
\end{equation*}
thus establishing the inequality \eqref{bn_ge_ineq_main}.
\end{enumerate}
\end{proof}

We are now ready to rephrase and prove Theorem \ref{thm:shulga}.

\begin{lem}
Given $\alpha\in[0,1]$, either there exist unique $n\geq0$ and finite sequences $b_1,\ldots,b_n$ and $c_1,\ldots,c_n$ such that $\alpha\in A_n$ and 
\begin{equation*}
\label{alpha_fin_decomp}
\alpha=[0;b_1,\ldots,b_n]+[0;c_1,\ldots,c_n]
\end{equation*}
or there exist unique infinite sequences $b_1,b_2,\ldots$ and $c_1,c_2,\ldots$ such that
$$
\alpha\in \bigcap_{n=0}^\infty A_n\,.
$$
\end{lem}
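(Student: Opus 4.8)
The plan is to run Shulga's algorithm step by step and track, by induction on $n$, the set of real numbers compatible with the first $n$ steps, showing this set is always exactly $A_n$ for the corresponding sequences $b_1,\ldots,b_n$ and $c_1,\ldots,c_n$. Lemma \ref{large_intersec_lem} already gives one direction: if $\alpha\in A_n$ then the first $n$ steps are well defined and produce $b_1,\ldots,b_n$, $c_1,\ldots,c_n$. For the converse and for the dichotomy, I would argue as follows. Fix $\alpha$ and suppose the algorithm has produced $b_1,\ldots,b_k$, $c_1,\ldots,c_k$ at step $k$ (with $\alpha\in A_k$, which holds vacuously at $k=0$ and is maintained, see below). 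If at this point $\alpha=[0;b_1,\ldots,b_k]+[0;c_1,\ldots,c_k]$, the algorithm terminates and we are in the first case with $n=k$; the required identity is exactly the termination condition, and $\alpha\in A_k$ was already established. Otherwise the algorithm defines $b_{k+1}=a_{k+1}(\alpha-[0;c_1,\ldots,c_k])+1$ and $c_{k+1}=a_{k+1}(\alpha-[0;b_1,\ldots,b_{k+1}])$; here one must check these partial quotients exist, i.e.\ that $\alpha-[0;c_1,\ldots,c_k]$ has continued fraction expansion of length $>k$ and agrees with $b_1,\ldots,b_k$ in its first $k$ digits up to the standard $b_k\geftrightarrow(b_k-1,1)$ ambiguity — this is where $\alpha\in A_k$ is used, via \eqref{cyl_def}. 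Once $b_{k+1}$, $c_{k+1}$ are defined, the definitions of $B_{k+1}$ and $C_{k+1}$ in \eqref{cn_bn_def} together with \eqref{cyl_def} show $\alpha\in B_{k+1}\cap C_{k+1}$, hence $\alpha\in A_{k+1}$, closing the induction step.

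Next I would address uniqueness at a fixed finite stage: the sequences produced are forced, because at each step $b_{k+1}$ and $c_{k+1}$ are defined by explicit formulas in terms of $\alpha$ and the already-determined $b_1,\ldots,b_k$, $c_1,\ldots,c_k$; an easy induction shows the whole tuple is a function of $\alpha$ alone. The only subtlety is the choice of continued fraction representation for rational intermediate quantities, which is pinned down by the paper's standing convention that the last partial quotient of a rational exceeds $1$; I would remark that this makes $a_{k+1}(\cdot)$ unambiguous wherever it is invoked. Combining this with the inclusion just proved gives: if the algorithm stops at step $n$, then $\alpha\in A_n$ and the decomposition identity holds, and $n$ together with the two sequences is unique. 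If the algorithm never stops, then $\alpha\in A_n$ for every $n$, so $\alpha\in\bigcap_{n=0}^\infty A_n$, and the two infinite sequences are unique by the same step-by-step determinism.

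Finally there is a bookkeeping point I should not skip: the two cases are genuinely exhaustive and mutually exclusive. They are exclusive because at a terminating step the condition $\alpha=[0;b_1,\ldots,b_n]+[0;c_1,\ldots,c_n]$ holds, whereas non-termination means this equality fails at every stage; and they are exhaustive simply because for each $\alpha$ the algorithm either halts at some finite $n$ or runs forever. One might also want to observe that when $\alpha\in\bigcap_n A_n$ the partial sums $[0;b_1,\ldots,b_n]+[0;c_1,\ldots,c_n]$ converge to $\alpha$, recovering $\alpha=\beta+\gamma$ in the infinite case; this follows because $\alpha\in B_n$ forces $|\alpha-[0;c_1,\ldots,c_{n-1}]-[0;b_1,\ldots,b_n]|$ to be at most the length of $B_n$, which tends to $0$, but strictly speaking the lemma as stated only asserts the nesting $\alpha\in\bigcap_n A_n$, so this last remark is optional. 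The main obstacle, such as it is, is the careful handling of the continued-fraction representation ambiguity for rationals throughout the induction — ensuring that every application of $a_{k+1}(\cdot)$ and every endpoint comparison in \eqref{cyl_def} is consistent with the convention that rationals end in a partial quotient $\geq 2$, and that the half-open intervals in \eqref{cn_bn_def} are oriented correctly via Convention \ref{conv} for both parities of $k$.
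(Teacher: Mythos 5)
There is a genuine gap at the heart of your induction step. You assert that once $b_{k+1}$ and $c_{k+1}$ are produced by the algorithm, the definitions in \eqref{cn_bn_def} ``together with \eqref{cyl_def} show $\alpha\in B_{k+1}\cap C_{k+1}$''. But by \eqref{cyl_def}, membership $\alpha\in B_{k+1}$ means that the continued fraction expansion of $\alpha-[0;c_1,\ldots,c_k]$ begins with $b_1,\ldots,b_k,b_{k+1}-1$, whereas the algorithm only supplies its $(k+1)$th partial quotient, and the inductive hypothesis $\alpha\in A_k$ only controls the expansion of $\alpha-[0;c_1,\ldots,c_{k-1}]$ --- a different subtrahend. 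That the first $k$ partial quotients survive replacing $[0;c_1,\ldots,c_{k-1}]$ by $[0;c_1,\ldots,c_k]$ (and, on the $C$ side, replacing $[0;b_1,\ldots,b_k]$ by $[0;b_1,\ldots,b_{k+1}]$) is precisely the nontrivial content of Theorem \ref{thm:shulga}, which this lemma is meant to reprove self-containedly; assuming it here is circular. Without it, the quantity $a_{k+1}(\alpha-[0;c_1,\ldots,c_k])$ might not even be defined: the algorithm could get stuck on a rational whose expansion is too short without the termination condition holding, a third alternative that your ``either halts or runs forever'' exhaustiveness argument silently excludes.

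The paper closes exactly this gap by proving the covering statements \eqref{subset_C_inB_n1} and \eqref{subset_Bn1_inC_n1}: every point of $C_n$ other than the terminal point $[0;b_1,\ldots,b_n]+[0;c_1,\ldots,c_n]$ lies in exactly one $B_{n+1}(b)$, and every point of a relevant $B_{n+1}(b)$ lies in exactly one $C_{n+1}(b,c)$. This simultaneously shows that the next partial quotient exists, that the algorithm's output coincides with the index of the interval containing $\alpha$, and that $\alpha\in A_{n+1}$. These coverings reduce via \eqref{eq:first} and \eqref{eq:second} to the inequalities
\begin{equation*}
\frac{1}{t_n(t_n+t_{n-1})}\le\frac{1}{q_n(q_n+q_{n-1})}
\qquad\text{and}\qquad
\frac{1}{q_{n+1}(q_{n+1}-q_{n})}<\frac{1}{t_n(t_n+t_{n-1})}\,,
\end{equation*}
the first of which needs $c_k\ge b_k$ (hence $t_k\ge q_k$) from Lemma \ref{Cn_Bn_lem}\eqref{item:cn_gt_lem}, and the second of which needs Lemma \ref{Cn_Bn_lem}\eqref{item:bncn1_lem} together with $b_{n+1}\ge2$. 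This quantitative work is the entire substance of the lemma and is absent from your proposal; your remarks on uniqueness, representation conventions, and Convention \ref{conv} are correct but peripheral.
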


\begin{proof}
By induction on $n$. Let $n\ge 1$, $b_1,\ldots, b_n\geq2$ and $c_1,\ldots, c_n$ satisfy $\alpha\in A_n$. Recall that by definition \eqref{eq:bncn} one has
\begin{equation*}
    \begin{aligned}
        B_{n}&=\bigl[[0;b_1,\ldots,b_{n}-1], [0;b_1,\ldots,b_{n}]\bigr)+[0;c_1,\ldots,c_{n-1}]\,,\\
        C_{n}&=\bigl[[0;c_1,\ldots,c_{n}],[0;c_1,\ldots,c_{n}+1]\bigr)+[0;b_1,\ldots,b_{n}]\,,
    \end{aligned}
\end{equation*}
and for every two integers $b\geq2$ and $c\geq 1$ let this notation be extended by setting
\begin{align*}
B_{n+1}(b)&=\bigl[[0;b_1,\ldots,b_{n},b-1],[0;b_1,\ldots,b_{n},b]\bigr)+[0;c_1,\ldots,c_{n}]\,,\\
C_{n+1}(b,c)&=\bigl[[0;c_1,\ldots,c_{n},c],[0;c_1,\ldots,c_{n},c+1]\bigr)+[0;b_1,\ldots,b_{n},b]\,.
\end{align*}
It is enough to show that
\begin{align}
\label{subset_C_inB_n1}
C_n&\sub\bigcup\limits_{b=2}^{\infty}B_{n+1}(b)\cup\{[0;b_1,\ldots,b_{n}]+[0;c_1,\ldots,c_{n}]\}\,,\quad\textrm{and}\\
\label{subset_Bn1_inC_n1}
B_{n+1}(b)&\sub\bigcup\limits_{c=1}^{\infty}C_{n+1}(b,c)\,,\quad\textrm{for any $b$ such that $B_{n+1}(b)\cap C_n\neq\emptyset$}\,.
\end{align}
Since the unions in \eqref{subset_C_inB_n1} and \eqref{subset_Bn1_inC_n1} are disjoint, this also implies the uniqueness in the statement of the theorem. It follows from \eqref{eq:mon} that the right-hand sides of \eqref{subset_C_inB_n1} and \eqref{subset_Bn1_inC_n1} are the intervals
\begin{align*}
&\bigl[[0;b_1,\ldots,b_{n}], [0;b_1,\ldots,b_{n}+1]\bigr]+[0;c_1,\ldots,c_{n}]\,,\quad\textrm{and}\\
&\bigl([0;c_1,\ldots,c_{n}],[0;c_1,\ldots,c_{n}+1]\bigr]+[0;b_1,\ldots,b_{n},b]\,.
\end{align*}
respectively. Thus, the containments in \eqref{subset_C_inB_n1} and \eqref{subset_Bn1_inC_n1} are equivalent to the inequalities
\begin{align}
\label{eq:first}
[0;b_1,\ldots,b_n+1]+[0;c_1,\ldots,c_{n}]&\ge[0;b_1,\ldots,b_n]+[0;c_1,\ldots,c_{n}+1]\,,\quad\textrm{and}\\
\label{eq:second}
[0;b_1,\ldots,b_{n},b]+[0;c_1,\ldots,c_{n},1]&> [0;b_1,\ldots,b_{n},b-1]+[0;c_1,\ldots,c_{n}]\,.
\end{align}
Similarly to the way it was done in the proof of Lemma \ref{Cn_Bn_lem}, the inequality \eqref{eq:first} can rewritten as
\begin{equation}
\label{tn_le_qn}
\frac{1}{t_n(t_n+t_{n-1})}\le\frac{1}{q_n(q_n+q_{n-1})}\,.
\end{equation}
Lemma \ref{Cn_Bn_lem}\eqref{item:cn_gt_lem} implies that $c_k\geq b_k$ for all $1\leq k\leq n$. Therefore, from the recurrence law \eqref{eq:convergents} one can easily deduce that $t_k\geq q_k$ for every $1\leq k \leq n$. In particular, this holds for $k=n$ and $k=n-1$, which implies \eqref{tn_le_qn}. 

As for \eqref{eq:second}, fix any $b$ that satisfies $B_{n+1}(b)\cap C_n\neq\varnothing$. Setting $b_{n+1}=b$ and arguing as before, shows that \eqref{eq:second} can be rewritten as
\begin{equation*}
\label{qnq_qn_letn_tn1}
\frac{1}{q_{n+1}(q_{n+1}-q_{n})}<\frac{1}{t_n(t_n+t_{n-1})}\,.
\end{equation*}
The last inequality follows from Lemma \ref{Cn_Bn_lem}\eqref{item:bncn1_lem} with $n+1$ instead of $n$. Indeed, as $b_{n+1}\ge 2$, one has $q_{n+1}-q_n>q_n$ and therefore
\begin{equation*}
\label{qnq_qn_letn_tn1_2}
\frac{1}{q_{n+1}(q_{n+1}-q_{n})}<\frac{1}{q_{n+1} q_{n}}\,.
\end{equation*}
.
\end{proof}

Starting from this point and until the end of this section and through Sections \ref{sec:rationals} and \ref{sec:growth}, given a real number $\alpha$, the sequences $b_n(\alpha)$ and $c_n(\alpha)$ stand for the corresponding digits that arise from Shulga's algorithm with input $\alpha$. We will omit the dependency on $\alpha$ when it does not create ambiguity.

Now we are ready to prove Theorem \ref{Th1}.
\begin{proof}[Proof of Theorem 4]
Assume by contradiction that for some $n\geq2$ the partial quotients do not satisfy $c_n>b_n$. By \eqref{eq:main-shulga} this means that $c_n=b_n$. Denoting 
\begin{align*}
    x&=[0;b_{n-1},\ldots,b_1]\,,\\ 
    y&=[0;c_{n-1},\ldots,c_1]\,,\quad\textrm{and}\\
    z&=c_n=b_n\,,
\end{align*}
and substituting it into Lemma \ref{Cn_Bn_lem}\eqref{item:cn_gt_lem} and Lemma \ref{Cn_Bn_lem}\eqref{bn_gt_lem} gives
\begin{align*}
z&>(1+y)(z-1+x)\,,\quad\textrm{and}\\
z&>\left(1+\frac{1}{z}\right)\left(1+\frac{1}{y}\right)(1-x)-1\,,
\end{align*}
respectively. This is a contradiction since these inequalities can be rearranged into
\begin{align*}
z&<\left(1+\frac{1}{y}\right)(1-x)\,,\quad\textrm{and}\\
z&>\left(1+\frac{1}{y}\right)(1-x),
\end{align*}    
respectively.
\end{proof}

\section{Rational or divergent decomposition}\label{sec:rationals}
In this section we use Theorem \ref{Th1} in order to deduce Corollary \ref{cor:main}. Fix a number $\alpha\in[0,1]$ and let $b_n$ and $c_n$ be as defined in Defition \ref{def:shulga}. Also, let $p_n,q_n,s_n,t_n$ be as defined in \eqref{eq:fractions}.

\begin{lem}\label{lem:mon}
The sequence of ratios $\frac{t_n}{q_n}$ is strictly increasing.
\end{lem}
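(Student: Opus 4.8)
The plan is to compute the ratio $\frac{t_n/q_n}{t_{n-1}/q_{n-1}}$ using the recurrences \eqref{eq:convergents} and show it exceeds $1$. First I would rewrite $\frac{t_n}{q_n}\cdot\frac{q_{n-1}}{t_{n-1}}$ and use \eqref{eq:common} to express $\frac{q_n}{q_{n-1}}=[b_n;b_{n-1},\ldots,b_1]$ and $\frac{t_n}{t_{n-1}}=[c_n;c_{n-1},\ldots,c_1]$. So the claim $\frac{t_n}{q_n}>\frac{t_{n-1}}{q_{n-1}}$ is equivalent to
$$
[c_n;c_{n-1},\ldots,c_1] > [b_n;b_{n-1},\ldots,b_1]\,,
$$
i.e. $c_n+[0;c_{n-1},\ldots,c_1] > b_n+[0;b_{n-1},\ldots,b_1]$.

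The key input is Theorem \ref{Th1} (for $n>1$) together with \eqref{eq:main-shulga} (for all $n\geq1$), which give $c_n\geq b_n$, with strict inequality once $n>1$. I would split into cases. For $n>1$: since $c_n\geq b_n+1$ and $[0;c_{n-1},\ldots,c_1],[0;b_{n-1},\ldots,b_1]\in(0,1)$, we get $c_n+[0;c_{n-1},\ldots,c_1] > b_n+1 > b_n+[0;b_{n-1},\ldots,b_1]$, so the desired strict inequality holds. For $n=1$: here $\frac{t_1}{q_1}=\frac{c_1}{b_1}$ and $\frac{t_0}{q_0}=1$, so I need $c_1>b_1$; but in fact Shulga's defining step gives $b_1=a_1(\alpha)+1$ and $c_1=a_1(\alpha-[0;b_1])$, and from $B_1\cap C_1\neq\emptyset$ one extracts $c_1\geq b_1$ via \eqref{eq:main-shulga} — I should check whether $c_1=b_1$ is actually possible, and if so the lemma as stated would need $n\geq2$; more likely the intended reading is that $t_0/q_0=1$ and $c_1\geq b_1\geq 2$ forces $c_1/b_1\geq 1$ with equality only if $c_1=b_1$, so a tiny additional argument (e.g. $B_1\cap C_1$ being a nonempty interval, not a point, together with Lemma \ref{Cn_Bn_lem}\eqref{item:cn_gt_lem} which already yields $c_1>(1+[0;\,])(b_1-1+[0;\,])$ hence $c_1>b_1-1$, i.e. $c_1\geq b_1$) pins it down; alternatively the lemma is only used for $n\geq2$ in the sequel.

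Alternatively — and perhaps more cleanly — I would prove it by induction using the recurrence directly: from $q_n=b_nq_{n-1}+q_{n-2}$ and $t_n=c_nt_{n-1}+t_{n-2}$, the inequality $t_nq_{n-1}>q_nt_{n-1}$ becomes $(c_nt_{n-1}+t_{n-2})q_{n-1} > (b_nq_{n-1}+q_{n-2})t_{n-1}$, i.e. $(c_n-b_n)t_{n-1}q_{n-1} > q_{n-2}t_{n-1} - t_{n-2}q_{n-1}$. The right side equals $-(t_{n-2}q_{n-1}-q_{n-2}t_{n-1})$, which by the inductive hypothesis $t_{n-1}/q_{n-1}>t_{n-2}/q_{n-2}$ and positivity has controlled sign, and the left side is $\geq t_{n-1}q_{n-1}>0$ once $c_n>b_n$. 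The main obstacle is the base case and making sure the $n=1$ (or $n=2$) case is handled correctly, since Theorem \ref{Th1} is only stated for $n>1$; everything else is a routine manipulation of the continued-fraction recurrences and the already-established $c_n\geq b_n$.
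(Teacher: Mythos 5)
Your first argument is essentially the paper's proof: the paper likewise writes $\frac{t_n}{t_{n-1}}\frac{q_{n-1}}{q_n}=\frac{[c_n;c_{n-1},\ldots,c_1]}{[b_n;b_{n-1},\ldots,b_1]}$ via \eqref{eq:common} and concludes from $c_n-b_n\ge1$ (Theorem \ref{Th1}) together with $[0;c_{n-1},\ldots,c_1],\,[0;b_{n-1},\ldots,b_1]<1/2$ that this ratio exceeds $1+\frac{1}{2b_n+1}>1$. Your caveat about $n=1$ is well founded but is shared by the paper: $c_1=b_1$ can occur (e.g.\ $531/629$ has $b_1=c_1=2$, so $t_1/q_1=t_0/q_0$), so the lemma must be read as asserting strict growth of $t_n/q_n$ from index $n=1$ onward, which is exactly what is used in Lemma \ref{lem:t_n_q_n_infty}.
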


\begin{proof}
Indeed, since $b_n$ and $c_n$ are integers, the inequality \eqref{cnbn_main_ineq} implies that $c_n-b_n\ge 1$. Therefore, we have
\begin{equation}
\label{step_1_ratio}
\begin{aligned}
\frac{t_{n}}{t_{n-1}}\frac{q_{n-1}}{q_{n}}
&=
\frac{[c_{n};c_{n-1},\ldots,c_1]}{[b_{n};b_{n-1},\ldots,b_1]}
\\
&=1+\frac{(c_{n}-b_{n})+([0;c_{n-1},\ldots,c_1]-[0;b_{n-1},\ldots,b_1])}{b_{n}+[0;b_{n-1},\ldots,b_1]}>1+\frac{1}{2b_n+1}\,.
\end{aligned}
\end{equation}
The inequality in \eqref{step_1_ratio} follows from the trivial estimates 
$$
[0;c_{n-1},\ldots,c_1],\;[0;b_{n-1},\ldots,b_1] < 1/2\,,
$$
which follow from the fact that $c_{n-1},b_{n-1}\ge 2$. 
\end{proof}

\begin{lem}\label{lem:t_n_q_n_infty}
If the algorithm takes infinitely many steps then
\begin{equation}\label{eq:statement}
\lim\limits_{n\to\infty}\frac{t_n}{q_n}=\infty\,.
\end{equation}
\end{lem}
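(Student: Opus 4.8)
The plan is to argue by contradiction. By Lemma~\ref{lem:mon} the sequence $(t_n/q_n)$ is increasing, so if it does not diverge to infinity it is bounded; since $t_0/q_0=1$, we may fix $L\ge1$ with $t_n/q_n\le L$ for every $n$. Under this assumption I will show that the ratio of consecutive terms, $\frac{t_nq_{n-1}}{q_nt_{n-1}}$, tends to infinity, which is absurd: that ratio equals $\frac{t_n/q_n}{t_{n-1}/q_{n-1}}$ and is bounded by $L$, since $t_{n-1}/q_{n-1}\ge t_0/q_0=1$.

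The first step is to convert non-termination of the algorithm into a quadratic lower bound on $c_n$. Since the algorithm runs forever, $\alpha\in A_n\subseteq B_n\cap C_n$ for every $n$, so $B_n\cap C_n\ne\emptyset$, and Lemma~\ref{Cn_Bn_lem}\eqref{item:cnbn} yields $q_n(q_n-q_{n-1})<t_{n-1}(t_n+t_{n-1})$. On the left use $q_n\ge b_nq_{n-1}$ and $q_n-q_{n-1}\ge(b_n-1)q_{n-1}$; on the right use $t_n+t_{n-1}=(c_n+1)t_{n-1}+t_{n-2}\le(c_n+2)t_{n-1}$; and then invoke the standing assumption $q_{n-1}/t_{n-1}\ge1/L$. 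This rearranges to $c_n>\frac{b_n(b_n-1)}{L^2}-2$.

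The second step reuses the identity established inside the proof of Lemma~\ref{lem:mon}, but keeps the term $c_n-b_n$ instead of discarding it: for every $n\ge2$, using $[0;c_{n-1},\ldots,c_1]\ge0$ and $[0;b_{n-1},\ldots,b_1]<\tfrac12$,
$$\frac{t_nq_{n-1}}{q_nt_{n-1}}=1+\frac{(c_n-b_n)+\bigl([0;c_{n-1},\ldots,c_1]-[0;b_{n-1},\ldots,b_1]\bigr)}{b_n+[0;b_{n-1},\ldots,b_1]}\ge1+\frac{c_n-b_n-\tfrac12}{b_n+\tfrac12}\,,$$
where the last inequality uses $c_n-b_n-\tfrac12>0$, which holds by Theorem~\ref{Th1}. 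Feeding in the bound from the first step together with $b_n\ge n$ (Theorem~\ref{Th2}) shows the right-hand side tends to infinity with $n$, contradicting $\frac{t_nq_{n-1}}{q_nt_{n-1}}\le L$. Hence $(t_n/q_n)$ is unbounded, and since it is increasing it diverges to infinity.

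The main obstacle is the first step. Reading $B_n\cap C_n\ne\emptyset$ naively only gives $c_n\gtrsim b_n^2(q_{n-1}/t_{n-1})^2$, which is vacuous unless one controls $q_{n-1}/t_{n-1}$ from below --- but such control is precisely the conclusion we are after. The resolution is that the contradiction hypothesis supplies $q_{n-1}/t_{n-1}\ge1/L$ for free, and everything downstream is bookkeeping; one should just keep an eye on the trivial edge cases ($t_{n-2}\le t_{n-1}$ for $n\ge2$, positivity of the numerators via Theorem~\ref{Th1}, and $t_k/q_k\ge1$ for all $k$).
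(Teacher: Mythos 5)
Your argument is correct, and it is built from the same two ingredients as the paper's proof --- the non-emptiness criterion \eqref{CnBn_ineq1} from Lemma~\ref{Cn_Bn_lem}\eqref{item:cnbn} and the step-ratio identity \eqref{step_1_ratio} --- but it reorganizes them differently. The paper argues directly by a dichotomy on $\{c_n\}$: if $c_n$ is unbounded, \eqref{CnBn_ineq1} gives $t_n^2/q_n^2>c_n/4$ outright (no auxiliary bound needed), and if $c_n$ is bounded then \eqref{step_1_ratio} gives geometric growth of $t_n/q_n$. You instead run a single contradiction argument with a uniform bound $L$, extract $c_n>b_n(b_n-1)/L^2-2$, and then blow up the step ratio using $b_n\ge n$. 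The one point to flag is that $b_n\ge n$ is inequality \eqref{bn_main_ineq} of Theorem~\ref{Th2}, which the paper proves only in Section~\ref{sec:growth}, \emph{after} this lemma; this is not circular (the proof of Theorem~\ref{Th2} uses only Lemma~\ref{Cn_Bn_lem}, Theorem~\ref{Th1} and a finite verification, never Lemma~\ref{lem:t_n_q_n_infty} or Corollary~\ref{cor:main}), but it makes your proof depend on the heaviest result of the paper where something much lighter suffices. You could remove that dependence entirely by splitting on whether $\{b_n\}$ is bounded: if $b_n\le N$ for all $n$, your own step-2 inequality with just $c_n-b_n\ge1$ already gives $\frac{t_nq_{n-1}}{q_nt_{n-1}}\ge 1+\frac{1}{2N+1}$, forcing $t_n/q_n\to\infty$ and contradicting the bound $L$; if $b_n\to\infty$, your step 1 finishes. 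All the individual estimates you use (the bounds $q_n\ge b_nq_{n-1}$, $q_n-q_{n-1}\ge(b_n-1)q_{n-1}$, $t_n+t_{n-1}\le(c_n+2)t_{n-1}$ for $n\ge2$, and the sign condition $c_n-b_n-\tfrac12>0$ from Theorem~\ref{Th1}) check out.
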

\begin{proof}
Suppose that the sequence $c_n$ is unbounded. From (\ref{CnBn_ineq1}) one can see that
\begin{equation*}
\frac{c_n}{2t^2_n}<\frac{1}{2t_{n-1}t_n}<\frac{2}{q^2_n}\,.
\end{equation*}
Thus,
\begin{equation*}
\frac{t^2_n}{q^2_n}>\frac{c_n}{4}\,.
\end{equation*}
Therefore, the divergence \eqref{eq:statement} follows directly from Lemma \ref{lem:mon}.

Considering the remaining case, if all the numbers $c_n$, and hence $b_n$, do not exceed some positive constant $N$, then \eqref{step_1_ratio} implies that
\begin{equation*}
\frac{t_n}{q_n}>\left(1+\frac{1}{2N-1}\right)^{n-1}\,,
\end{equation*}
thus finishing the proof.
\end{proof}

\begin{remark}
    In fact, it follows from the growth rate estimates of Section \ref{sec:growth} that the situation in the second paragraph of the proof of Lemma \ref{lem:t_n_q_n_infty} cannot occur, namely, that if $\{c_n\}_{n\geq1}$ is an infinite sequence then it is unbounded. However, since the above argument is short we decided to present it here and keep the more involved analysis for the next section.
\end{remark}

Now we are ready to prove Corollary \ref{cor:main}.
\begin{proof}[Proof of Corollary \ref{cor:main}]
First, we show that if $\alpha=p/q\in\mathbb{Q}$ then the algorithm converges in finitely many steps.
Let $C_n$ be the interval obtained on the $n$th step of the algorithm. The fact that $\alpha\in C_n$ is equivalent to the property
\begin{equation*}
\frac{p}{q}-\frac{p_n}{q_n}\in\bigl[[0;c_1,\ldots c_n],\ [0;c_1,\ldots,c_n+1]\bigr).    
\end{equation*}
Therefore, it follows from \eqref{cyl_def} that the denominator of $\frac{p}{q}-\frac{p_n}{q_n}$ is at least $t_n$. On the other hand, this denominator does not exceed $q_nq$. Thus, we have the inequality
\begin{equation*}
\frac{t_n}{q_n}\le q\,.    
\end{equation*}
Due to Lemma \ref{lem:t_n_q_n_infty}, the algorithm must stop at a finite step.

If $\alpha$ is irrational, one can easily see that the numbers $[0;b_1,b_2,\ldots]$ and $[0;c_1,c_2,\ldots]$ must lie in the set $G$. Indeed, from \eqref{c_n_bn1} we have
\begin{equation*}
q_{n+1}q_n>t_n^2\,.
\end{equation*}
Thus,
\begin{equation*}
b_{n+1}>\left(\frac{t_n}{q_n}\right)^2-1\,.    
\end{equation*}
Due to Lemma \ref{lem:t_n_q_n_infty}, the sequences $\{b_n\}_{n\geq1}$, and therefore $\{c_n\}_{n\geq1}$, both tend to infinity as $n$ tends to infinity.
\end{proof}

\section{Growth rate}\label{sec:growth}
In the previous section, we showed that the sequences $b_n$ and $c_n$ 
tend to infinity as $n\to\infty$. In this section, we provide some quantitative estimates on the growth rate of these sequences. First, we establish the inequality \eqref{monot_main_ineq}.
\begin{lem}
\label{bn_monot_lem}
For all $n\ge1$ one has $b_{n+1}\ge b_n$.    
\end{lem}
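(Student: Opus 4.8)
The plan is to prove $b_{n+1}\ge b_n$ by using the characterizations of nonempty intersections in Lemma \ref{Cn_Bn_lem}. Since $\alpha\in A_{n+1}$, all five intervals $B_{n-1},C_{n-1},B_n,C_n,B_{n+1},C_{n+1}$ contain $\alpha$, so in particular every pairwise intersection among them is nonempty. The key point is that the inequality \eqref{bn_ge_ineq_main} from Lemma \ref{Cn_Bn_lem}\eqref{bn_gt_lem}, applied with $n+1$ in place of $n$ (which is legitimate because $B_n\cap C_{n+1}\cap C_n\ne\emptyset$), gives a lower bound for $b_{n+1}$ in terms of $c_n, c_{n-1}$ and the "reversed" continued fractions of the $b$'s. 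I would combine this with Theorem \ref{Th1} (which gives $c_n > b_n$, hence $c_n\ge b_n+1$ since these are integers, and likewise $c_{n-1}\ge b_{n-1}+1$) to replace the $c$'s by $b$'s, and then check that the resulting quantity is at least $b_n$.

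Concretely, first I would write down \eqref{bn_ge_ineq_main} with index $n+1$:
\begin{equation*}
b_{n+1}>\left(1+\frac{1}{c_{n+1}}\right)\left(c_{n}+1+[0;c_{n-1},\ldots,c_1]\right)\left(1-[0;b_{n},\ldots,b_1]\right)-1\,.
\end{equation*}
Since $c_{n+1}\ge1$ the first factor is harmless; I would instead extract the stronger content: use $c_n\ge b_n+1$ to bound $c_n+1+[0;c_{n-1},\ldots,c_1]\ge b_n+2$, and bound $1-[0;b_n,\ldots,b_1]$ from below using that $b_k\ge2$ for all $k$, so the reversed continued fraction $[0;b_n,\ldots,b_1]$ is at most, say, $1/2$ (in fact $\le [0;2] = 1/2$), giving $1-[0;b_n,\ldots,b_1]\ge 1/2$. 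Then $b_{n+1}>\tfrac12(b_n+2)-1 = b_n/2$, which is not quite enough. So the bound needs to be handled more carefully: I would keep the factor $1-[0;b_n,\ldots,b_1]$ as it is and observe that $[0;b_n,\ldots,b_1]$ is small — more precisely $[0;b_n,\ldots,b_1] \le 1/b_n \le 1/2$ — and that $[0;c_{n-1},\ldots,c_1]\ge0$, so
\begin{equation*}
b_{n+1}+1 > \left(1+\frac1{c_{n+1}}\right)(b_n+2)\left(1-\tfrac1{b_n}\right) \ge (b_n+2)\cdot\frac{b_n-1}{b_n} = b_n + 1 - \frac{2}{b_n}\,,
\end{equation*}
and since $b_n\ge2$ (using \eqref{bn_main_ineq} or just $b_1\ge2$ together with monotonicity, though here I must avoid circularity — I can use $b_n\ge 2$ which holds by definition) we get $b_{n+1}+1 > b_n - 1 + (2 - 2/b_n) \ge b_n$; as $b_{n+1}$ is an integer this yields $b_{n+1}\ge b_n$.

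The main obstacle I anticipate is making the numerical estimate on the factor $(1-[0;b_n,\ldots,b_1])$ tight enough: the crude bound $\ge1/2$ loses a factor of two and fails, so one must exploit that $[0;b_n,\ldots,b_1]\le 1/b_n$ and that the other factor is $\ge b_n+2$ (not just $\ge b_n$), so the product is $\ge (b_n+2)(1-1/b_n) = b_n + 1 - 2/b_n$, which beats $b_n$ for $b_n\ge2$. A secondary subtlety is ensuring that Lemma \ref{Cn_Bn_lem}\eqref{bn_gt_lem} is applicable with shifted index, i.e.\ that $B_n\cap C_{n+1}\cap C_{n}\ne\emptyset$; this is immediate since $\alpha$ lies in $A_{n+1}\subseteq B_n\cap C_{n+1}\cap C_n$. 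I should also double-check the edge case $n=1$, where $[0;c_{n-1},\ldots,c_1]$ and $[0;b_{n-1},\ldots,b_1]$ are empty continued fractions equal to $0$, so the inequality reads $b_2 > (1+1/c_2)(c_1+1) - 1 \ge c_1 \ge b_1$, which is even cleaner.
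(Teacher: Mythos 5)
Your proposal is correct and follows essentially the same route as the paper's proof: both apply Lemma \ref{Cn_Bn_lem}\eqref{bn_gt_lem} with index $n+1$, insert $c_n\ge b_n+1$ and $[0;b_n,\ldots,b_1]\le 1/b_n$, and reduce to $(b_n+2)(1-1/b_n)=b_n+1-2/b_n\ge b_n$ for $b_n\ge2$ (the paper phrases this as a contradiction with $b_{n+1}\le b_n-1$, you argue directly and finish by integrality). Two minor remarks: in your $n=1$ aside the factor $[0;b_n,\ldots,b_1]=[0;b_1]=1/b_1$ is not an empty continued fraction (only the $c$-term is), though your general computation already covers $n=1$; and the bound $c_1\ge b_1+1$ is not given by Theorem \ref{Th1} (which assumes $n>1$) --- a gap the paper's own proof shares --- but it is harmless since $c_1=b_1$ forces $b_1=2$, where $b_2\ge2=b_1$ holds trivially.
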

\begin{proof}
We suppose the contrary, and denote $b_n=t$. Then $c_n\ge t+1$ and $b_{n+1}\le t-1$. From \eqref{bn_ge_ineq_main}, applying the trivial estimates $1+\frac{1}{c_{n+1}}>1$ and $[0;c_{n-1},\ldots,c_1]>0$, we get
\begin{equation*}
t>\left(t+2\right)\left(1-\frac{1}{t}\right),
\end{equation*}
and thus obtain a contradiction, as $t\ge 2$.
\end{proof}
\begin{lem}
\label{lem_32}
Suppose that $b_n\ge 8$. If $b_{n+1}<c_n$, then $c_{n+1}>\frac{3}{2}b_n$.    
\end{lem}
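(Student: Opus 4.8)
The plan is to argue by contradiction, just as in the proof of Lemma~\ref{bn_monot_lem}, but this time extracting more out of inequality~\eqref{bn_ge_ineq_main} by combining it with the hypothesis $b_{n+1} < c_n$ and with the lower bound $b_n \geq 8$. Set $t := b_n$, so that $c_n \geq t+1$ by Theorem~\ref{Th1}, and suppose for contradiction that $c_{n+1} \leq \tfrac32 b_n = \tfrac32 t$. The key point is that $b_{n+1} < c_n$ means that the intersection $B_{n+1}\cap C_{n+1}\cap C_n$ is actually realized (i.e.\ nonempty), because $\alpha$ lies in all of $B_{n+1}$, $C_{n+1}$, $C_n$; this licenses the use of Lemma~\ref{Cn_Bn_lem}\eqref{item:cn_gt_lem} with index $n+1$ in place of $n$, which gives
\begin{equation*}
c_{n+1} > \bigl(1+[0;c_n,\ldots,c_1]\bigr)\bigl(b_{n+1}-1+[0;b_n,\ldots,b_1]\bigr)\,.
\end{equation*}
Meanwhile Lemma~\ref{Cn_Bn_lem}\eqref{bn_gt_lem} with index $n+1$ (using $B_n\cap C_{n+1}\cap C_n\ne\emptyset$) gives a \emph{lower} bound for $b_{n+1}$ of the shape
\begin{equation*}
b_{n+1} > \Bigl(1+\tfrac{1}{c_{n+1}}\Bigr)\bigl(c_n+1+[0;c_{n-1},\ldots,c_1]\bigr)\bigl(1-[0;b_n,\ldots,b_1]\bigr)-1\,.
\end{equation*}

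Next I would feed these two inequalities into each other. From the second one, together with $c_n \geq t+1$, the trivial bounds $[0;c_{n-1},\ldots,c_1] \geq 0$ and $[0;b_n,\ldots,b_1] < 1/2$ (valid since $b_1,\ldots,b_n \geq 2$; in fact one should use a sharper bound like $[0;b_n,\ldots,b_1] \leq 1/t$ coming from $b_n = t$, if needed), and the assumed ceiling $c_{n+1} \leq \tfrac32 t$, one gets a concrete lower bound $b_{n+1} > f(t)$ for some explicit rational function $f$. Substituting that, in turn, into the first displayed inequality along with $[0;c_n,\ldots,c_1] > 0$ produces a lower bound $c_{n+1} > g(t)$. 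The contradiction will come from showing $g(t) > \tfrac32 t$ for all $t \geq 8$, i.e.\ that the forced value of $c_{n+1}$ exceeds the ceiling we assumed. This is where the hypothesis $b_n \geq 8$ is consumed: the function $g(t) - \tfrac32 t$ should be increasing (or at least eventually positive) and the threshold $8$ is exactly what makes it positive.

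The main obstacle I anticipate is bookkeeping the error terms $[0;b_n,\ldots,b_1]$, $[0;c_{n-1},\ldots,c_1]$, etc.\ tightly enough: replacing them all by the crudest bounds ($0$ from below, $1/2$ from above) may not leave enough room to beat $\tfrac32 t$, so I would likely need to retain the $1-[0;b_n,\ldots,b_1] \geq 1 - 1/t$ factor and possibly use $1 + 1/c_{n+1} > 1$ on one occurrence while keeping it genuinely larger than $1$ on another, or use $c_n \geq t+1$ rather than $c_n \geq t$. A secondary subtlety is checking that the relevant triple intersections are nonempty so that parts~\eqref{item:cn_gt_lem} and~\eqref{bn_gt_lem} of Lemma~\ref{Cn_Bn_lem} apply with shifted index; this follows because $\alpha \in A_{n+1}$ by Lemma~\ref{large_intersec_lem}, so $\alpha$ witnesses membership in every interval $B_k, C_k$ for $k \leq n+1$ simultaneously. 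Once the algebra is arranged so that every inequality is sharp enough, the final step is a single-variable estimate: verify $g(t) > \tfrac32 t$ for $t \geq 8$, which should reduce to checking a quadratic (or low-degree rational) inequality, with $t = 8$ as the boundary case one plugs in by hand.
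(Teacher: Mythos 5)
There is a genuine gap here, and it is structural rather than a matter of bookkeeping. First, you have misplaced the role of the hypothesis $b_{n+1}<c_n$: the triple intersections needed for Lemma \ref{Cn_Bn_lem}\eqref{item:cn_gt_lem} and Lemma \ref{Cn_Bn_lem}\eqref{bn_gt_lem} at index $n+1$ are nonempty for \emph{every} $\alpha$ whose algorithm reaches step $n+1$ (as you yourself observe, $\alpha\in A_{n+1}$), so no ``licensing'' is required and the hypothesis is not consumed there. It must instead be used \emph{quantitatively}, as the upper bound $b_{n+1}\le c_n-1$; your plan never does this, since the only information about $c_n$ you feed into \eqref{bn_ge_ineq_main} is $c_n\ge b_n+1$ from Theorem \ref{Th1}. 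With that input the two inequalities are too weak to close the argument. Writing $t=b_n$ and granting you every sharp error bound, Lemma \ref{Cn_Bn_lem}\eqref{bn_gt_lem} at index $n+1$ with the ceiling $c_{n+1}\le\frac32 t$ gives at best $b_{n+1}>(1+\frac{2}{3t})(t+2)(1-\frac1t)-1\approx t+\frac53$, and Lemma \ref{Cn_Bn_lem}\eqref{item:cn_gt_lem} at index $n+1$ then returns only $c_{n+1}>b_{n+1}-1+O(1/t)\approx t+\frac23$. Thus your $g(t)-\frac32t\approx\frac23-\frac t2$ is negative for all $t\ge2$ and becomes \emph{more} negative as $t$ grows; the deficit is a constant multiplicative factor, which no refinement of the continued-fraction tails can recover, and the threshold $b_n\ge 8$ plays no role of the kind you predict.

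The missing idea is to first show that $c_n$ is roughly \emph{twice} $b_n$ (or else that $c_{n+1}$ is already large), and this is exactly where $b_{n+1}\le c_n-1$ enters. The paper inserts this upper bound into the left-hand side of \eqref{bn_ge_ineq_main} applied at index $n+1$, obtaining $c_n>(1+\frac{1}{c_{n+1}})(c_n+1)(1-\frac1{b_n})$, which rearranges to $\frac{1}{b_n-1}>\frac{1}{c_n}+\frac{1}{c_{n+1}}$. A case split then finishes: if $c_{n+1}\ge c_n$ this yields $c_{n+1}>2(b_n-1)\ge\frac74b_n$; if $c_{n+1}<c_n$ it yields $c_n>2(b_n-1)$, and a second application of \eqref{bn_ge_ineq_main} together with $c_{n+1}\ge b_{n+1}+1$ (Theorem \ref{Th1}) transfers this to $c_{n+1}>(2b_n-1)(1-\frac1{b_n})\ge\frac{49}{32}b_n$. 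The hypothesis $b_n\ge8$ is used only in these last numerical steps. If you wish to keep your contradiction framing, you would still need to derive $\frac{1}{b_n-1}>\frac{1}{c_n}+\frac{1}{c_{n+1}}$ and extract from it a lower bound on $c_n$ of order $2b_n$ before any contradiction with $c_{n+1}\le\frac32 b_n$ can appear.
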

\begin{proof}
Denote $c_n=t$, hence $b_{n+1}\le t-1$. Applying \eqref{bn_ge_ineq_main}, we obtain
\begin{equation*}
t>\left(1+\frac{1}{c_{n+1}}\right)\left(t+1\right)\left(1-\frac{1}{b_n}\right).
\end{equation*}
Thus, we have
\begin{equation*}
1>\left(1+\frac{1}{t}+\frac{1}{c_{n+1}}\right)\left(1-\frac{1}{b_n}\right),
\end{equation*}
which is equivalent to
\begin{equation}
\label{bn_ge_ineq_bn1_gt_cn2}
\frac{c_{n+1}}{b_{n}-1}>1+\frac{c_{n+1}}{t}\,.
\end{equation}
If $c_{n+1}\ge t$ then
\begin{equation*}
\frac{c_{n+1}}{b_n}>2\frac{b_n-1}{b_n}\ge\frac{7}{4}\,.
\end{equation*}
If $c_{n+1}<t$ then it follows from \eqref{bn_ge_ineq_bn1_gt_cn2} that
\begin{equation*}
c_n=t> 2(b_n-1)\,.
\end{equation*}
From \eqref{bn_ge_ineq_main} with $n+1$ instead of $n$ and Theorem \ref{Th1}, it follows that
\begin{align*}
c_{n+1}&\ge b_{n+1}+1>\left(1+\frac{1}{c_{n+1}}\right)\left(c_n+1+[0;c_{n+1},\ldots,c_1]\right)\left(1-[0;b_{n},\ldots,b_1]\right)
\\[2pt]
& >
\left(2(b_n-1)+1\right)\left(1-\frac{1}{b_n}\right)>2(b_n-1)\left(1-\frac{1}{b_n}\right)
\\[2pt]
&=
2b_n\left(1-\frac{1}{b_n}\right)^2\ge\frac{49}{32}b_n\,.    
\end{align*}
The lemma is proved.
\end{proof}

Now we are ready to prove Theorem \ref{Th2}.
\begin{proof}[Proof of Theorem \ref{Th2}]
The first inequality \eqref{monot_main_ineq} is already established in Lemma \ref{bn_monot_lem}. One can easily check (e.g., by a computer program) that \eqref{monot_main_ineq2} and \eqref{bn_main_ineq} hold if $n\le 6$. In fact, all $\alpha\in[0,1]$ for which $b_6$ is defined satisfy
\begin{equation}\label{eq:b_6}
    b_6\ge 8\,.
\end{equation} 
Note that for all $n,k\in\mathbb{N}$ we have $b_{n+k}\ge b_n$, $c_{n+k}>b_n$, so for $n\geq6$ we may apply the consequence of Lemma \ref{lem_32}. 

We start with the second inequality \eqref{monot_main_ineq2}. Denote $b_n=t$. Then $c_n\ge t+1$. If $b_{n+1}\ge t+1$ then $b_{n+2}\ge t+1=b_n+1$ by Lemma \ref{bn_monot_lem}, so \eqref{monot_main_ineq2} is established. Otherwise, by Lemma \ref{lem_32} we have $c_{n+1}\ge\frac{3}{2}b_n$, and therefore, applying \eqref{bn_ge_ineq_main} for index $n+2$ instead of $n$, we obtain
\begin{equation*}
b_{n+2}>\frac{3}{2}b_n\left(1-\frac{1}{b_{n+1}}\right)-1\ge \frac{3}{2}b_n\left(1-\frac{1}{b_{n}}\right)-1=\frac{3}{2}(b_n-1)-1>b_n+1\,. 
\end{equation*}
Thus, we established \eqref{monot_main_ineq2}.

Let us now prove the last inequality \eqref{bn_main_ineq}. Let $n>6$ be an arbitrary integer greater than $6$. If $b_{k+1}\ge c_k$ for all $6\leq k\leq n-2$ then Theorem \ref{Th1} implies by induction that $b_{n-1}\ge n+1$. Therefore, Lemma \ref{bn_monot_lem} gives $b_n \geq n+1$. Otherwise, let $l$ be the smallest index $6\leq l\le n-2$ satisfying $b_{l+1}<c_l$. Due to the definition of the index $l$ we conclude as above that $b_{l+1}\ge b_{l}\geq l+1$. By Lemma \ref{lem_32}, one has

\begin{equation*}
c_{l+1}>\frac{3}{2}b_l\,,
\end{equation*}
and therefore, applying \eqref{bn_ge_ineq_main} once again we have
\begin{equation*}
b_{l+2}>\frac{3}{2}b_l\left(1-\frac{1}{b_{l+1}}\right)-1\ge
\frac{3}{2}b_l\left(1-\frac{1}{b_{l}}\right)-1=\frac{3}{2}(b_l-1)-1\ge b_l+\frac{3}{2}\ge l+\frac{7}{2}\,.
\end{equation*}
Since $b_{l+2}$ and $l$ are integers this implies that $b_{l+2}\ge l+4=(l+2)+2$. This argument can be repeated with the index $6$ replaced by $l+2$, so it follows by induction with base \eqref{eq:b_6} that \eqref{bn_main_ineq} holds for every $n\geq1$. Thus, the proof of Theorem \ref{Th2} is completed.
\end{proof}
    
\subsection{Proof of Theorem \ref{examp_thm}}
In this subsection, we construct an irrational number $\alpha$ that satisfies Theorem \ref{examp_thm}. In our construction, we define the infinite sequences $b_1, b_2,\ldots$ and $c_1,c_2,\ldots$ and put
\begin{equation}
\label{alpha_def}
\alpha=[0;b_1,b_2,\ldots]+[0;c_1,c_2,\ldots]\,.
\end{equation}
Using \eqref{cn_bn_def}, we will define the intervals $B_1, C_1, B_2, C_2,\ldots$. Our goal will be to show that these intervals are nesting, i.e.,
\begin{equation}
\label{nesting_property}
B_1\supseteq C_1\supseteq B_2\supseteq C_2\supseteq\ldots\supseteq B_n\supseteq C_n\supseteq\ldots
\end{equation}
By Lemma \ref{large_intersec_lem}, the property \eqref{nesting_property} yields that \eqref{alpha_def} is the Shulga decomposition of $\alpha$. 

We put $b_1=2$ and $c_1=4$, and continue by recursion. Suppose that the numbers $b_1,\ldots,b_n$ and $c_1,\ldots,c_n$ are defined. Define 
\begin{equation}\label{eq:the_definition0}
    b_{n+1}\defeq c_n+2\,.
\end{equation}
The definition of $c_{n+1}$ is slightly more complicated. We define for $m=2,3$ the quantities
\begin{equation}\label{eq:the_definition}
c^{(m)}_{n+1}\defeq b_{n+1}+m\,,\quad \frac{s^{(m)}_{n+1}}{t^{(m)}_{n+1}}\defeq[0;c_1,\ldots,c_n,b_{n+1}+m]\,,
\end{equation}
and then
\begin{equation}\label{eq:the_definition2}
c_{n+1}\defeq 
\begin{cases}
    c^{(2)}_{n+1}&\textrm{if }0<c^{(2)}_{n+1}-\left(\frac{t^{(2)}_{n+1}}{q_{n+1}}\right)^2<1\,,\\
    c^{(3)}_{n+1}&\textrm{otherwise.}
\end{cases}
\end{equation}
So the first several digits are
\begin{equation}\label{eq:beginning}
\begin{aligned}
\relax
[0;b_1,\ldots,b_6,\ldots]&=[0;2,6,11,16,21,26,\ldots]\,,\\
[0;c_1,\ldots,c_6,\ldots]&=
[0;4,9,14,19,24,28,\ldots]\,.
\end{aligned}
\end{equation}

The proof of the nesting property \eqref{nesting_property} is based on the following observation: 


\begin{lem}
\label{cn_cn1_2}
For every $n\geq2$ the partial quotient $c_n$ satisfies
\begin{equation}
\label{tn_qn_ratio}
0<c_{n}-\left(\frac{t_{n}}{q_{n}}\right)^2 < 1\,.
\end{equation}
\end{lem}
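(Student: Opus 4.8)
The plan is to prove \eqref{tn_qn_ratio} by induction on $n$, where the key point is that the recursive definition \eqref{eq:the_definition2} is \emph{designed} to make the inequality hold whenever it possibly can, so the real content is to verify that the ``otherwise'' branch $c_{n+1} = c^{(3)}_{n+1} = b_{n+1}+3$ also lands in the interval $\left(\left(t_{n+1}/q_{n+1}\right)^2, \left(t_{n+1}/q_{n+1}\right)^2 + 1\right)$. In other words, I would show that for the two candidate values $c^{(2)}_{n+1}$ and $c^{(3)}_{n+1}$, \emph{at least one} of them satisfies the two-sided bound, and then \eqref{eq:the_definition2} picks a good one. First I would write $r \defeq t_{n+1}/q_{n+1}$ where here $t_{n+1}$ really means $t^{(m)}_{n+1}$ and depends on the choice of $c_{n+1}$; I would track how $r$ changes as the last digit $c_{n+1}$ increases from $b_{n+1}+2$ to $b_{n+1}+3$, using \eqref{eq:common}, namely $t_{n+1}/t_n = [c_{n+1}; c_n,\ldots,c_1]$ and $q_{n+1}/q_n = [b_{n+1}; b_n,\ldots,b_1]$, so that $r = \frac{t_n}{q_n}\cdot\frac{[c_{n+1};c_n,\ldots,c_1]}{[b_{n+1};b_n,\ldots,b_1]}$.

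The crucial quantitative estimate I would establish is that bumping $c_{n+1}$ by $1$ increases $r^2$ by less than $1$; then if $c^{(2)}_{n+1} - r_2^2 \le 0$ (so the first branch fails), passing to $c^{(3)}_{n+1} = c^{(2)}_{n+1}+1$ increases the left side by exactly $1$ and the subtracted term $r^2$ by less than $1$, which forces $c^{(3)}_{n+1} - r_3^2 > 0$; and one also needs the upper bound $c^{(3)}_{n+1} - r_3^2 < 1$, which should follow because $r_3^2 > r_2^2$ is not much smaller than $c^{(2)}_{n+1} = c^{(3)}_{n+1}-1$ — here I would use the induction hypothesis $0 < c_n - (t_n/q_n)^2 < 1$ together with $b_{n+1} = c_n + 2$ from \eqref{eq:the_definition0} to control $t_n/q_n$ versus $b_{n+1}$, since $r \approx (t_n/q_n)\cdot(c_{n+1}/b_{n+1})$ and $c_{n+1} \approx b_{n+1}+2 \approx c_n + 4$ while $(t_n/q_n)^2 \approx c_n$, making $r^2 \approx c_n (c_{n+1}/b_{n+1})^2 \approx c_n(1 + 2/b_{n+1})^2 \approx c_n + 4c_n/b_{n+1} \approx c_n + 4 \approx c_{n+1}$. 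The base case $n=2$ is handled by the explicit digits in \eqref{eq:beginning}: with $[0;b_1,b_2]=[0;2,6]$ and $[0;c_1,c_2]=[0;4,9]$ one has $q_2 = 13$, $t_2 = 40$, and checks $0 < 9 - (40/13)^2 < 1$ directly.

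The main obstacle I anticipate is making the two one-step estimates tight enough simultaneously — that is, showing the increment of $r^2$ under $c_{n+1}\mapsto c_{n+1}+1$ is strictly less than $1$ (needed for the ``otherwise'' branch to satisfy the lower bound) while also being large enough, combined with the induction hypothesis, to keep $c^{(3)}_{n+1} - r_3^2$ strictly below $1$. This requires a reasonably careful expansion of $r = \frac{t_n}{q_n}\cdot\frac{c_{n+1} + [0;c_n,\ldots,c_1]}{b_{n+1} + [0;b_n,\ldots,b_1]}$ and bounding the tail continued fractions $[0;c_n,\ldots,c_1], [0;b_n,\ldots,b_1] \in (0, 1/2)$, but since $b_{n+1} = c_n+2$ grows linearly (by the already-proved Theorem \ref{Th2}, or directly from the recursion) all error terms are $O(1/n)$ and the estimates become comfortable for $n$ large; the finitely many small $n$ can be checked by hand or computer as in the base case. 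Once Lemma \ref{cn_cn1_2} is in place, the definition \eqref{eq:the_definition2} is vindicated, and the inequality \eqref{tn_qn_ratio} is exactly the input needed to run the $B_n \cap C_n \neq \emptyset$ criterion of Lemma \ref{Cn_Bn_lem}\eqref{item:cnbn} and establish the nesting \eqref{nesting_property}.
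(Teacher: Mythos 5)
Your overall architecture (induction, analysing both candidate branches of \eqref{eq:the_definition2}, and controlling $\left(t_{n+1}/q_{n+1}\right)^2$ via \eqref{eq:common}) is the right one, and your heuristic $r^2\approx c_n\left(1+2/b_{n+1}\right)^2\approx c_n+4\approx c^{(2)}_{n+1}$ correctly identifies why the construction is borderline. But your ``crucial quantitative estimate'' is false, and the case analysis built on it points the wrong way. Incrementing the last digit from $c^{(2)}_{n+1}$ to $c^{(3)}_{n+1}=c^{(2)}_{n+1}+1$ increases $t_{n+1}$ by exactly $t_n$, hence increases $r$ by $t_n/q_{n+1}\approx 1/\sqrt{c_n}$ (small), but increases $r^2$ by roughly $2r\cdot t_n/q_{n+1}\approx 2\sqrt{c^{(2)}_{n+1}}/\sqrt{c_n}\approx 2$ --- \emph{not} by less than $1$. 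Consequently the quantity $c_{n+1}-r^2$ \emph{decreases} by approximately $1$ when you pass from the $(2)$-branch to the $(3)$-branch; it does not increase. This inverts your mechanism: the first branch cannot fail from below (one shows $c^{(2)}_{n+1}-\bigl(t^{(2)}_{n+1}/q_{n+1}\bigr)^2>\frac{c_n+4}{c_n}\,x>0$ where $x=c_n-(t_n/q_n)^2$ is the induction quantity), so the only failure mode is $c^{(2)}_{n+1}-\bigl(t^{(2)}_{n+1}/q_{n+1}\bigr)^2\ge 1$, and the role of the $(3)$-branch is to pull an overshoot back \emph{down} into $(0,1)$, which requires the complementary upper bound $c^{(2)}_{n+1}-\bigl(t^{(2)}_{n+1}/q_{n+1}\bigr)^2<3/2$.

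There is also a local logical gap even within the case you do treat: from ``$c^{(2)}_{n+1}-r_2^2\le 0$'' and ``$r_3^2-r_2^2<1$'' you conclude $c^{(3)}_{n+1}-r_3^2>0$, but these hypotheses only give $c^{(3)}_{n+1}-r_3^2>c^{(2)}_{n+1}-r_2^2$, which is useless when the right-hand side is $\le 0$; you would additionally need a lower bound on $c^{(2)}_{n+1}-r_2^2$. To repair the proof you need three quantitative facts, all second-order in $1/c_n$: a lower bound $y\defeq c^{(2)}_{n+1}-\bigl(t^{(2)}_{n+1}/q_{n+1}\bigr)^2>\frac{c_n+4}{c_n}x>0$, an upper bound $y<3/2$, and, assuming $1\le y<3/2$, the two-sided bound $c^{(2)}_{n+1}<\bigl(t^{(3)}_{n+1}/q_{n+1}\bigr)^2<c^{(3)}_{n+1}$, using $\bigl(t^{(3)}_{n+1}/q_{n+1}\bigr)^2=\bigl(t^{(2)}_{n+1}/q_{n+1}\bigr)^2\bigl(1+[0;c^{(2)}_{n+1},c_n,\ldots,c_1]\bigr)^2$. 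The base case also needs to cover $2\le n\le 6$ (not just $n=2$), since the quantitative estimates only kick in once $c_n$ is large enough.
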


\begin{proof}
The proof goes by induction. For $2\leq n \leq6$ the inequality \eqref{tn_qn_ratio} is verified by a direct computation. Let $n\geq6$ and
\begin{align*}
x\defeq c_{n}-\left(\frac{t_{n}}{q_{n}}\right)^2,
&&y\defeq c^{(2)}_{n+1}-\left(\frac{t^{(2)}_{n+1}}{q_{n+1}}\right)^2,
&&z\defeq c^{(3)}_{n+1}-\left(\frac{t^{(3)}_{n+1}}{q_{n+1}}\right)^2.
\end{align*}
We note that by definition \eqref{eq:the_definition} and by \eqref{eq:common}, one gets
\begin{equation}
\label{tn1_qn1_m_ratio_term_n}
\begin{aligned}
\left(\frac{t^{(2)}_{n+1}}{q_{n+1}}\right)^2&=\left(\frac{t_{n}}{q_{n}}\right)^2
\left(\frac{c_n+4+[0;c_n,\ldots,c_1]}{c_n+2+[0;b_n,\ldots,b_1]}\right)^2\\
&=
\left(1+\frac{2+[0;c_n,\ldots,c_1]-[0;b_n,\ldots,b_1]}{c_n+2+[0;b_n,\ldots,b_1]}\right)^2(c_n-x)\,.
\end{aligned}
\end{equation}
Due to $c_n>b_n$, one has $[0;c_n,\ldots,c_1]<[0;b_n,\ldots,b_1]$. Thus,
\begin{equation*}
\left(\frac{t^{(2)}_{n+1}}{q_{n+1}}\right)^2<\left(1+\frac{2}{c_n+2}\right)^2(c_n-x)=\left(\frac{c_{n}+4}{c_{n}+2}\right)^2(c_n-x)\,.
\end{equation*}
Thus, we obtain the following lower bound on $y$:
\begin{align*}
y&>c_n+4-\left(\frac{c_{n}+4}{c_{n}+2}\right)^2(c_n-x)=(c_n+4)\left(1-\frac{c_n+4}{c_n+2}\frac{c_n-x}{c_n+2}\right)\\[2pt]
&>(c_n+4)\left(1-\frac{c_n-x}{c_n}\right)=\frac{c_{n}+4}{c_n}x\,.
\end{align*}
Therefore, by definition \eqref{eq:the_definition2}, in order to complete the induction it is enough to show that if $0<x<1$ and $y\geq1$ then $0<z<1$. On one hand, the assumption $y\geq1$ and \eqref{eq:common} imply that
\begin{equation}\label{eq:similar}
\begin{aligned}
\left(\frac{t^{(3)}_{n+1}}{q_{n+1}}\right)^2
&=    
\left(\frac{t^{(2)}_{n+1}}{q_{n+1}}\right)^2\left(\frac{t^{(2)}_{n+1}+t_n}{t^{(2)}_{n+1}}\right)^2
\\[2pt]
&=
\left(\frac{t^{(2)}_{n+1}}{q_{n+1}}\right)^2\left(1+\left[0;c^{(2)}_{n+1},c_n,\ldots,c_1\right]\right)^2
\\
&< \left(c^{(2)}_{n+1}-1\right)\left(1+\frac{1}{c^{(2)}_{n+1}}\right)^2
\\[2pt]
&<c^{(2)}_{n+1}+1=c^{(3)}_{n+1}\,.
\end{aligned}
\end{equation}
So, $z>0$. On the other hand, since $n\geq6$, equation \eqref{eq:beginning} gives that $b_n,c_n\geq10$.
From \eqref{tn1_qn1_m_ratio_term_n} one gets
\begin{equation}
\label{tn1_qn1_m_ratio_term_ge}
\left(\frac{t^{(2)}_{n+1}}{q_{n+1}}\right)^2>\left(1+\frac{1.9}{c_n+2.1}\right)^2(c_n-x)=\left(\frac{c_{n}+4}{c_{n}+2.1}\right)^2(c_n-x)\,.
\end{equation}
Thus, we obtain the following upper bound on $y$:
\begin{align*}
y
&<c_n+4-\left(1+\frac{1.9}{c_{n}+2.1}\right)^2(c_n-x)
\\[2pt]
&<
c_n+4-\left(1+\frac{3.8}{c_{n}+2.1}\right)(c_n-x)
\\[2pt]
&=
4-\frac{3.8(c_n-x)}{c_n+2.1}(c_n-x)+x < 3/2\,,
\end{align*}
where the last inequality follows from $x<1$ and $c_n\geq26$. Therefore, similarly to \eqref{eq:similar}, one gets
\begin{align*}
\left(\frac{t^{(3)}_{n+1}}{q_{n+1}}\right)^2    
&=
\left(\frac{t^{(2)}_{n+1}}{q_{n+1}}\right)^2\left(\frac{t^{(2)}_{n+1}+t_n}{t^{(2)}_{n+1}}\right)^2
\\[2pt]
&>
\left(c^{(2)}_{n+1}-3/2\right)\left(1+\frac{1}{c^{(2)}_{n+1}+1}\right)^2
\\[2pt]
&>
\frac{\left(c^{(2)}_{n+1}-3/2\right)\left(c^{(2)}_{n+1}+3\right)}{c^{(2)}_{n+1}+1}
\\[2pt]
&>c^{(2)}_{n+1}=c^{(3)}_{n+1}-1\,,
\end{align*}
where the last inequality follows from $c_n\ge 10$. Thus, we established that $z<1$ and therefore the inequality \eqref{tn_qn_ratio} holds for all $n\geq1$.
\end{proof}
Now we are ready to prove Theorem \ref{examp_thm}.

\begin{proof}[Proof of Theorem \ref{examp_thm}]
Due to the definition of $b_n$ and $c_n$ in \eqref{eq:the_definition0} and \eqref{eq:the_definition2}, one can see that the inequality \eqref{2n_3n} is satisfied. Hence, it suffices to establish the nesting property \eqref{nesting_property}. 
First, let us show that $B_n\supseteq C_n$. 

With out loss of generality, we assume that $n$ is even. This containment is equivalent to (recall Figure \ref{fig:intervals})
\begin{equation}
\label{BnCn_compl_case}
[0;b_1,\ldots,b_{n-1},b_n]+[0;c_1,\ldots,c_{n-1},c_n]>[0;b_1,\ldots,b_{n-1},b_n-1]+[0;c_1,\ldots,c_{n-1}].
\end{equation}
The inequality \eqref{BnCn_compl_case} is equivalent to 
\begin{equation*}
\frac{1}{t_n t_{n-1}}<\frac{1}{q_n(q_n-q_{n-1})}\,,
\end{equation*}
which can be rewritten as
\begin{equation*}
c_n+[0;c_{n-1},\ldots,c_1]<\left(\frac{t_n}{q_n}\right)^2\frac{1}{1-[0;b_n,\ldots,b_1]}\,.
\end{equation*}
Applying \eqref{tn_qn_ratio}, one can see that it suffices to show
\begin{equation*}
c_n+\frac{1}{c_{n-1}}<\frac{c_n-1}{1-[0;b_n,\ldots,b_1]}\,,   
\end{equation*}
which is equivalent to
\begin{equation*}
1-[0;b_n,\ldots,b_1]<1-\frac{1+1/c_{n-1}}{c_n+1/c_{n-1}}\,.
\end{equation*}
Taking into account that $b_{n-1}\ge 2$, we see that it is sufficient to show
\begin{equation}
\label{ineq22bn1}
\frac{c_{n-1}+1}{c_nc_{n-1}+1}<\frac{2}{2b_n+1}\,.
\end{equation}
As $c_n\ge b_n+2=c_{n-1}+4$, the inequality \eqref{ineq22bn1} follows from
\begin{equation}
\label{ineq22bn1case3}
\frac{c_{n-1}+1}{c^2_{n-1}+4c_{n-1}+1}<\frac{2}{2c_{n-1}+5}\,.
\end{equation}
One can easily see that \eqref{ineq22bn1case3} holds whenever $c_{n-1}>3$. Thus, we established that $B_n\supseteq C_n$.

Now we show that $C_n\supseteq B_{n+1}$. 
This containment is equivalent to
\begin{equation}
\label{CnBn1_compl_case}
[0;b_1,\ldots,b_n,b_{n+1}-1]+[0;c_1,\ldots,c_n]<[0;b_1,\ldots,b_n]+[0;c_1,\ldots,c_n+1]\,.
\end{equation}
The inequality \eqref{CnBn1_compl_case} is equivalent to
\begin{equation}
\frac{1}{q_n(q_{n+1}-q_n)}<\frac{1}{t_n(t_n+t_{n-1})}   
\end{equation}
which can be written as
\begin{equation*}
\frac{1}{b_{n+1}-1+[0;b_n,\ldots,b_1]}\left(\frac{t_n}{q_n}\right)^2<\frac{1}{1+[0;c_n,\ldots,c_1]}\,.
\end{equation*}
Taking into account \eqref{tn_qn_ratio}, we obtain that it is sufficient to show that
\begin{equation}
\label{tnqn_cn_ineq}
\frac{c_n}{c_n+1+[0;b_n,\ldots,b_1]}<\frac{1}{1+[0;c_n,\ldots,c_1]}\,.
\end{equation}
Note that the left-hand side of \eqref{tnqn_cn_ineq} is less than $\frac{c_n}{c_n+1}$, while the right-hand side of \eqref{tnqn_cn_ineq} is greater than this quantity.
Thus, we established the property $C_n\supseteq B_{n+1}$ and Theorem \ref{examp_thm} is proved. 
\end{proof}

\bibliographystyle{plain}
\bibliography{refs_arx_ver.bib}

\end{document}